\newtheorem{theorem}{Theorem}
\newtheorem{example}{Example}[section]
\newtheorem{proof}{Proof}
\begin{document}
	\title{\bf Optimal Sliced Latin Hypercube Designs with Slices of Arbitrary Run Sizes} 
	
	\author{\sffamily Jing Zhang$^1$,Jin Xu $^{1}$, Kai Jia $^1$, Yimin Yin $^1$ and Zhengming Wang$^2$\\
		{\sffamily\small $^1$ College of Liberal Arts and Sciences, National University of Defense Technology}\\
		{\sffamily\small $^2$ College of Advanced Interdisciplinary Research, National University of Defense Technology }}
	\maketitle


\begin{abstract}
		Sliced Latin hypercube designs (SLHDs) are widely used in computer experiments with both quantitative and qualitative factors and in batches. Optimal SLHDs achieve better space-filling property on the whole experimental region. However, most existing methods for constructing optimal SLHDs have restriction on the run sizes. In this paper, we propose a new method for constructing SLHDs with arbitrary run sizes, and a new combined space-filling measurement describing the space-filling property for both the whole design and its slices. Furthermore, we develop general algorithms to search the optimal SLHD with arbitrary run sizes under the proposed measurement. Examples are presented to illustrate that effectiveness of the proposed methods.\\
		\textbf{Keywords:} computer experiment;  optimal design; space-filling design;  maximin distance criterion
\end{abstract}

\section{Introduction}
\label{Section Introduction}
Computer experiments are becoming increasingly significant in many fields, such as finite element analysis and computational fluid dynamics. Latin hypercube designs (LHDs) \cite{McKay1979} are widely used in computer experiments because of their  optimal univariate uniformity. A design with $n$ runs and $q$ factors is called an LHD, if the design is projected onto any one dimension, there is precisely one point lying within one of the $n$ intervals $(0,1/n],(1/n,2/n],\cdots,((n-1)/n,1]$. Such an LHD is said to have optimal univariate uniformity. Sliced Latin hypercube designs (SLHDs) are LHDs that can be partitioned into some LHD slices \cite{Peter2012Sliced}, which means that the SLHDs have the optimal univariate uniformity for both the whole design and their slices.  In \cite{he2016central}, a central limit theorem for SLHDs is proposed.  SLHDs are popular for computer experiments with both qualitative and quantitative variables; see \cite{Qian2008Gaussian,Gang2009Prediction,Deng2017Additive} and the references therein.  Each slice of an SLHD can be used under one  level-combination of the qualitative factors.  However, the original  SLHDs  and almost all existing methods for constructing variants of SLHDs requires that the run sizes of each slice are equal;  see \cite{hwang2016sliced,yin2014sliced,xie2014general,yang2016construction}.


An SLHD is called desirable if its design points are well spread out for both the whole design and its slices. Randomly generated SLHDs usually have a poor space-filling property in the entire experimental region, i.e., randomly generated SLHDs may not be desirable.  There are a lot of methods that aim to improve the space-filling property of an SLHD. For instance, the method proposed by \cite{Huang2015Computer} can be used to generate an optimal clustered-sliced Latin hypercube design (OCSLHD) which has good space-filling property in the whole experimental region. 
In a multi-fidelity computer experiment,  each slice of an OCSLHD can be used for each accuracy of the computer code \cite{Huang2015Computer}.  Generally, we want to use more design points for the lower-accuracy experiments than those of the higher-accuracy experiments, since the lower the accuracy is, the faster it runs \cite{Kennedy2000Predicting, Qian2006Buildingsurrogate}.  However,  a lot of existing method for constructing optimal SLHDs can only generate SLHDs with equal run sizes of each slices, e.g., \cite{Huang2015Computer,Ba2015Optimal,Chen2016}. 
To overcome this restriction, we need a method that can construct SLHDs with slices of arbitrary run sizes, and with a good space-filling property over the whole experimental region.  For example, reference \cite{kong2018flexible} gave flexible sliced designs, but such designs are not LHDs. The method given
in \cite{xu2018sliced} provided SLHDs with unequal batch sizes, but this type of design only accommodates two different run sizes.  An algorithm is proposed  in \cite{2019arXiv190502721X} to construct an SLHD with unequal run sizes, but this construction method is difficult  to search the optimal design. 

In this paper, we propose a method to construct SLHDs with slices of arbitrary run sizes, which are called flexible sliced Latin hypercube designs (FSLHDs). The new construction method can be easily adapted to generate the optimal design. Furthermore, we provide a combined space-filling measurement (CSM) to descibe  the space-filling properties of both the whole design and each of slices. Based on an optimization algorithm called the enhanced stochastic evolutionary algorithm (ESE), we propose a sliced ESE (SESE) algorithm to find the optimal FSLHDs. We further develop an efficient two-part algorithm to improve the efficiency in generating space-filling FSLHDs with large runs and factors. The generated optimal FSLHDs have three attractive features:
(i) arbitrary run sizes of all slices, 
(ii) optimal univariate uniformity in the whole design and each slice,
(iii) good space-filling property in the experimental region. We believe that they are suitable for many multi-fidelity computer experiments in practice.

The remainder of this paper is organized as follows. The construction of FSLHDs is provided in Section \ref{Section 2}. In Section \ref{Section 3}, an CSM  are given to descibe  the space-filling properties of both the whole design and each of slices, and then we develop an SESE algorithm to obtain optimal FSLHDs based on the CSM and a two-part algorithm to improve  efficiency. Some simulation results are illustrated in Section \ref{Simulation results}.
Section \ref{Remarks} provides some discussions. Section \ref{Conclusions} concludes this paper. 

\section{Construction of SLHDs with slices of arbitrary run sizes }
\label{Section 2}
For a real number $a$, let $\lceil a\rceil$ denote the smallest integer not smaller than $a$. Given $u$ positive integers $n_{1},\cdots, n_{u}$, let $n=\sum_{i=1}^{u}n_{i}$ and let $L = { \rm lcm}(n_{1},\cdots,n_{u},n)$ be the least common multiple of $n_{1},\cdots,n_{u}$,  and $n$.  Suppose that  ${\rm FSLHD}(n_{1},\cdots,n_{u}; u, q)$ is an FSHLD with $u$ slices of run sizes $n_{1},\cdots,n_{u}$ and $q$ factors. Each column of the FSLHD is generated independently by the following algorithm. 
\begin{enumerate} [itemindent=2em] 
	\item[\textbf{Step 1.}] Let $H^{i}=\varnothing$ for $i=1,\cdots,u$, and $R_{0}=\varnothing$. 
	\item[\textbf{Step 2.}] For $j=1,\cdots,n$, let $R_{j,0}= R_{j-1}\bigcup\{j\}$ and calculate
	\begin{center}
		$\theta_{j}=\sum_{i=1} ^{u}\left(\lceil n_{i}(j+1)/n\rceil- \lceil n_{i}j/n\rceil\right)$.\\
	\end{center}
	If $\theta_{j}>0$, for $k=1,\cdots,\theta_{j}$, let $l$ denote the $k$th smallest integer of the set $ \{p|\lceil n_{p}(j+1)/n\rceil - \lceil n_{p}j/n\rceil=1\}$  and $r = \min \{ r|\lceil n_{l}r/n\rceil = \lceil n_{l}j/n\rceil, r\in R_{j,k-1}\}$
	add $r$ to $H^{l}$ and let $R_{j,k}=R_{j,k-1}\setminus \{r\}$. Let $R_{j}=R_{j,\theta_{j}}$ and go to the next $j$.
	\item[\textbf{Step 3.}]For $i=1,\cdots,u$, generate a vector $\boldsymbol {h}^{i}$ by randomly permuting $H^{i}$.
	\item[\textbf{Step 4.}]For $i=1,\cdots,u$, calculate $\boldsymbol{m}^{i}=L\boldsymbol{h}^{i}/n$, where $\boldsymbol{m}^{i} = (m^{i}_{1},\cdots,m^{i}_{n_{i}})$. Combine $\boldsymbol{m}^{1},\cdots,\boldsymbol{m}^{u}$ to  obtain  an $n$-dimensional column vector $\boldsymbol{m}=(m_{1},\cdots,m_{n})^{T}$, then
	let $\boldsymbol{d}^{i}=(d^{i}_{1},\cdots,d^{i}_{n_{i}})^{T} $ be constructed by
	\begin{equation}
	\label{eq1a}
	d^{i}_{s}=(m^{i}_{s}-\varepsilon^{i}_{s})/L,\quad
	s=1,\cdots,n_{i}, 
	\end{equation}
	where $\varepsilon^{i}_{s}\sim U(0,1)$. Combine $\boldsymbol{d}^{1},\cdots,\boldsymbol{d}^{u}$ to  obtain  an $n$-dimensional column vector $\boldsymbol{d}=(d_{1},\cdots,d_{n})^{T}$,  and $\boldsymbol{d}$ is one column  of the design.
\end{enumerate}

In the above algorithm  $\boldsymbol{m}$ is called a column of  the flexible sliced Latin hypercube (FSLH). The following theorem shows that both the  whole FSLHD and its slices  are LHDs. 
\begin{theorem}
	\label{Theo 1}
	Let $\boldsymbol{d}=(d_{1},\cdots,d_{n})^{T}$ denote an arbitrary column of FSLHD($n_{1},\cdots,n_{u};u, q$) generated by the above method. Let $\boldsymbol{d}^{1},\cdots,\boldsymbol{d}^{u}$ denote each slice.
	For $i =1,\cdots,u$, let $t^{i}=L/n_{i}$ and $t'=L/n$. \\
	(i)Precisely one point of $\boldsymbol{d}=(d_{1},\cdots,d_{n})^{T}$  lies within one of the $n$ intervals  $(0,1/n],(1/n,2/n],\cdots,((n-1)/n,1]$.\\
	(ii)Precisely one point of $\boldsymbol{d}^{i}=(d^{i}_{1},\cdots,d^{i}_{n_{i}})$ lies within one of the $n_{i}$ intervals  $(0,1/n_{i}],(1/n_{i},2/n_{i}]$,$\cdots,((n_{i}-1)/n_{i},1]$.
\end{theorem}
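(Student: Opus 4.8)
The plan is to first strip out the randomization and reduce both parts to a purely combinatorial statement about the integer construction in Steps~1--2, and then isolate the single nontrivial point. Throughout, note that $t'=L/n$ and $t^i=L/n_i$ are positive integers (since $n\mid L$ and $n_i\mid L$), and that by Step~4 every coordinate has the form $m^i_s=(L/n)h^i_s=t'h^i_s\in\mathbb{Z}$ with $d^i_s=(m^i_s-\varepsilon^i_s)/L$ and $\varepsilon^i_s\in(0,1)$. Hence every design point satisfies $d^i_s\in\big((m^i_s-1)/L,\;m^i_s/L\big)$, i.e. it is pinned down to a length-$1/L$ subinterval determined by the integer $m^i_s$, and the whole question becomes: into which length-$1/n$ (resp. length-$1/n_i$) cell does that subinterval fall.

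For (i), writing $h=h_s$ one checks $\big((t'h-1)/L,\,t'h/L\big)\subseteq\big((h-1)/n,\,h/n\big]$: the right endpoints agree because $t'h/L=h/n$, and the left inequality $(t'h-1)/L\ge(h-1)/n$ is exactly $t'\ge1$. So the point labelled $h$ lands in the $h$-th cell, and (i) follows the moment we know $\{h_1,\dots,h_n\}=\{1,\dots,n\}$, i.e. that $\bigcup_i H^i$ is a permutation of $\{1,\dots,n\}$. For (ii), set $t=\lceil n_i h/n\rceil$ with $h=h^i_s$; then $t'h/L=h/n\le t/n_i$ gives the right endpoint, while $t'h-1\ge(t-1)t^i$ holds because $t'h-(t-1)t^i$ is a \emph{positive integer} (positivity from $t-1<n_i h/n$). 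Thus $d^i_s\in\big((t-1)/n_i,\,t/n_i\big]$, and (ii) follows once we know that as $h$ ranges over $H^i$ the values $\lceil n_i h/n\rceil$ run through $\{1,\dots,n_i\}$ exactly once.

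So I would reduce everything to one claim about Step~2: \emph{the harvesting never gets stuck}. The quantity $\lceil n_i(j+1)/n\rceil-\lceil n_i j/n\rceil$ equals $1$ exactly when the nondecreasing sequence $c^i_j:=\lceil n_i j/n\rceil$ jumps at $j$; since $c^i$ climbs from $1$ (at $j=1$) to $n_i+1$ (at $j=n+1$) in unit steps, there are precisely $n_i$ jumps, at $j=\lfloor tn/n_i\rfloor=\max B^i_t$ for $t=1,\dots,n_i$, where $B^i_t:=\{r:\lceil n_i r/n\rceil=t\}$ is a contiguous block. At the jump with value $t$ the rule adds to $H^i$ a pool element $r$ with $\lceil n_i r/n\rceil=c^i_j=t$. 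Consequently, if every harvest succeeds (the pool always contains an eligible $r$), the ceilings collected for slice $i$ are automatically $\{1,\dots,n_i\}$ once each, giving (ii); and since a total of $\sum_i n_i=n$ harvests then occur among only the $n$ indices $1,\dots,n$, each removable at most once, every index is forced into exactly one $H^i$, giving the permutation needed for (i).

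The hard part is precisely this ``never stuck'' claim, and it is where the rule ``take the \emph{smallest} eligible $r$'' does real work. Viewing each pair $(i,t)$ as a demand compatible exactly with the elements of $B^i_t$, I would argue in two stages. First, a system of distinct representatives exists: by Hall's theorem this reduces to the interval inequality $\sum_i \#\{t:B^i_t\subseteq[p,q]\}\le q-p+1$ for every $[p,q]\subseteq[1,n]$, which holds by a short count (summing jumps over $[p-1,q]$ and subtracting the $u$ boundary blocks, one per slice) using $\sum_i n_i=n$. Second, and this is the crux, I would show that the \emph{online} procedure---which processes demands in increasing deadline order $j=\max B^i_t$ and always assigns the least available compatible index---actually realizes such a representative system; I would prove this by an exchange/augmenting-path argument showing that each least-index choice preserves the existence of a completion. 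The delicacy, and the reason a crude cardinality count fails, is that the pool may carry up to $u-1$ waiting indices (indeed $|R_j|=j+u-\sum_i c^i_{j+1}\le u-1$), so a naive bound leaves slack $u-1$; only the least-index rule---forcing ``flexible'' demands with long blocks to consume small indices, thereby reserving large indices for later, more constrained demands---closes this gap, and making that preservation property rigorous is the main obstacle.
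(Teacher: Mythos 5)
Your reduction is sound as far as it goes: the interval computations pinning $d^{i}_{s}$ into $\big((h-1)/n,\,h/n\big]$ and into $\big((t-1)/n_{i},\,t/n_{i}\big]$ with $t=\lceil n_{i}h/n\rceil$ are correct, and they do show that (i) and (ii) follow once one knows that every attempted harvest in Step~2 succeeds, so that $\bigcup_{i}H^{i}=\{1,\cdots,n\}$ and $\{\lceil n_{i}h/n\rceil : h\in H^{i}\}=\{1,\cdots,n_{i}\}$. But that is exactly where the proposal stops being a proof. You correctly observe that a Hall-type count gives the \emph{existence} of a system of distinct representatives, but existence of some assignment does not imply that the specific online rule of Step~2 (process jumps in increasing $j$, take the smallest eligible element of the pool $R_{j,k-1}$) realizes one; you describe the needed exchange/augmenting-path argument yourself as ``the main obstacle'' and leave it as a plan. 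So the central combinatorial fact on which both (i) and (ii) rest --- that the set $\{r\,|\,\lceil n_{l}r/n\rceil=\lceil n_{l}j/n\rceil,\ r\in R_{j,k-1}\}$ is never empty when Step~2 queries it --- is announced but not proved, and the proposal is incomplete. To close it you would either carry out the induction that each least-index assignment preserves Hall's condition for the remaining demands, or invoke the classical fact that greedy matching in deadline order with least available slot is optimal for convex bipartite graphs (each demand's eligible set being an interval of indices).

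It is worth knowing that the paper's own proof never confronts this point: it computes $\mathrm{card}(H^{i})=n_{i}$ by telescoping $\sum_{j}\left(\lceil n_{i}(j+1)/n\rceil-\lceil n_{i}j/n\rceil\right)$ and then asserts that for every $j$ there is an $h\in H^{i}$ with $\lceil n_{i}h/n\rceil=\lceil n_{i}j/n\rceil$ --- statements that are valid only if each of the $\theta_{j}$ selections in Step~2 actually returns an element, which is precisely the claim at issue. In that sense your analysis is sharper than the published argument (your bound $|R_{j}|\le u-1$ and your identification of why the least-index rule matters have no counterpart in the paper), but as a piece of mathematics submitted in place of the proof, it leaves the theorem's key step unestablished.
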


\begin{proof}
	(i)  Combine $\boldsymbol{h}^{1},\cdots,\boldsymbol{h}^{u}$  to obtain  $\boldsymbol{h}=(h_{1},\cdots,h_{n})^{T}$ that is a permutation of $\{1,\cdots,n\} $.
	Combine $\boldsymbol{m}^{1},\cdots,\boldsymbol{m}^{u}$  to obtain  $\boldsymbol{m}=(m_{1},\cdots,m_{n})^{T}$. Therefore, $\boldsymbol{m}= L\boldsymbol{h}/n$.
	For $t' =L/n$, because $\lceil \boldsymbol{m}/t' \rceil=\lceil (L\boldsymbol{h}/n)/t' \rceil =\lceil \boldsymbol{h}\rceil $,  $\lceil \boldsymbol{m}/t' \rceil$ is a  permutation of $\{1,\cdots,n\}$.
	Therefore, precisely one point of $\boldsymbol{d}=(d_{1},\cdots,d_{n})^{T}$  lies within one of the $n$ intervals  $(0,1/n],(1/n,2/n],\cdots,((n-1)/n,1]$.
	
	(ii) According to Step 2, for $i=1,\cdots, u$,  it is  clear that
	$card(H^{i})= \sum_{j=1}^{n} (\lceil n_{i}(j+1)/n\rceil - \lceil n_{i}j/n\rceil)=\lceil n_{i}(n+1)/n\rceil - \lceil n_{i}/n\rceil = n_{i}$, and for $j=1,\cdots,n$, $\lceil n_{i}j/n\rceil < \lceil n_{i}(j+1)/n\rceil $. 
	For any $i$,$j$, there is an integer $h\in H^{i}$ that satisfies $\lceil n_{i}h/n\rceil =\lceil n_{i}j/n\rceil$.
	Therefore, we have $\{m|m=\lceil n_{i}h/n\rceil, h\in H^{i}\}=\{1,\cdots,n_{i}\}$, which means that $\lceil n_{i}\boldsymbol{h}^{i} /n\rceil$ is a permutation of $\{1,\cdots,n_{i}\}$. 
	Since $\boldsymbol{m}^{i}=  L\boldsymbol{h}^{i}/n$, we have $\lceil \boldsymbol{m}^{i}/t^{i}\rceil=\lceil (L\boldsymbol{h}^{i}/n)/(L/n_{i})\rceil=\lceil  n_{i}\boldsymbol{h}^{i}/n\rceil $. Thus, $\lceil \boldsymbol{m}^{i}/t^{i}\rceil$ is a  permutation of $\{1,\cdots,n_{i}\}$. Therefore,  precisely one point of $\boldsymbol{d}^{i}$ lies within one of the $n_{i}$ intervals  $(0,1/n_{i}],(1/n_{i},2/n_{i}],\cdots,((n_{i}-1)/n_{i},1]$.
\end{proof}

We give an example to illustrate the process of the above method.
\begin{example}
	\label{Example 1}
	Consider $n_{1}=3$, $n_{2}=4$, $n_{3}=5$, $u=3$, $n=12$, and $L=60$. 
	\begin{enumerate}  [itemindent=2em] 
		\item[\textbf{Step 1.}]  $H^{1}=H^{2}=H^{3}=R_{0}=\emptyset$.
		\item[\textbf{Step 2.}]  Calculate $(\theta_{1},\cdots,\theta_{n})=(0,1,1,2,0,1,1,1,2,0,0,3)$.
		For $j=1$, then $R_{1,0}=\{1\}$, since $\theta_{1}=0$, we obtain $R_{1}=R_{1,0}=\{1\}$.
		For $j=2$, $R_{2,0}= R_{1}\cup \{2\}=\{1,2\}$, $\theta_{2}=1$, only an integer $l=3$ satisfies
		$\lceil n_{l}(j+1)/n\rceil - \lceil n_{L}j/n\rceil=1$,  and
		$r=$\textrm{min}$ \{r|\lceil n_{3}r/n\rceil =\lceil n_{3}j/n\rceil$, $r\in R_{2,0}\}=$ \textrm{min}$\{1,2\}=1$.
		Hence, we add $r=1$ to  $H^{3}$, $R_{2,1}=R_{2,0}\backslash \{1\}=\{2\}$, and $R_{2}=R_{2,1}=\{2\}$.
		For $j=3$, $R_{3,0}= R_{2}\cup \{3\}=\{2,3\}$, $\theta_{3}=1$,
		only an integer $l=2$  satisfies $\lceil n_{2}(j+1)/n\rceil - \lceil n_{2}j/n\rceil=1$, and $r=$\textrm{min}$ \{r|\lceil n_{2}r/n\rceil =\lceil n_{2}j/n\rceil$, $r\in R_{3,0}\}=$ \textrm{min}$\{2,3\}=2$.
		Therefore, we add $r=2$ to $H^{2}$ , $R_{3,1}= R_{3,0} \backslash \{2\}= \{3\}$, and $R_{3}=R_{3,1}=\{3\}$.\textsc{}
		After passing all $j$, we can get $R_{12}=\emptyset$, $H^{1}=\{3,7,10\}$, $H^{2}=\{2,5,8,11\}$, and $H^{3}=\{1,4,6,9,12\}$.
		\item[\textbf{Step 3.}] 
		We get $\boldsymbol{h}^{1}=(10,7,3)$ , $\boldsymbol{h}^{2}=(5,8,2,11)$, and $\boldsymbol{h}^{3}=(6,9,12,1,4)$ by randomly permuting $H^{1}$ , $H^{2}$, and $H^{3}$. 
		\item[\textbf{Step 4.}] 
		We obtain  $\boldsymbol{m}^{1}=(50,35,15)$, $\boldsymbol{m}^{2}=(25,40,10,55)$, and $\boldsymbol{m}^{3}=(30,45,60,5,20)$.
		Then  $\boldsymbol{d}^{i}=(d^{i}_{1},\cdots,d^{i}_{n_{i}})^{T} $ is constructed  through
		$d^{i}_{s}=(m^{i}_{s}-\varepsilon^{i}_{s})/60$,
		where  $i=1,\cdots,3$, $s=1,\cdots,n_{i}$, and $\varepsilon^{i}_{s}\sim U(0,1)$. Thus, we obtain an arbitrary column $\boldsymbol{d}=(d_{1},\cdots,d_{n})^{T}$  of the design.  
	\end{enumerate}
\end{example}

\section{Optimal  SLHDs with slices of arbitrary run sizes}
\label{Section 3}
Given $n_{1},\cdots,n_{u},u,q$, a number of possible FSLHDs  can be  generated through the proposed
method in Section \ref{Section 2}. Among such FSLHDs, we can find the optimal FSLHD through a given space-filling  criterion. We first propose a combined space-filling  measurement (CSM) to evaluate space-filling  property of FSLHD in Subsection \ref{subsection3.1}. Then, to keep the structure of the design during the optimization process, three methods are proposed to change position of the elements in one column in Subsection \ref{subsection3.2}. Finally, we present a sliced ESE algorithm  to optimize FSLHD  in Subsection \ref{subsection3.3}.  An efficient two-part algorithm for generating the space-filling  FSLHD is given in Subsection \ref{subsection3.4}.

\subsection{A combined space-filling measurement for FSLHD{\scriptsize s} }
\label{subsection3.1}
Various space-filling criteria are used to evaluate the LHDs, such as the maximin distance criterion \cite{Johnson1990Minimax,Grosso2009Finding,Dam2007MaximinLatin, Dam2009Bounds}, the $\phi_{t}$ criterion \cite{Jin2016An, Morris1995Exploratory, Ye2000Algorithmic, Viana2010An}, and the centered $L_{2}$-discrepancy (${\textrm CD}_{2}$) criterion  \cite{Hickernell1998A, Fang2002Centered}. All the  space-filling criteria can be extend to describe  space-filling propert of  the FSLHDs. We mainly focus on the $\phi_{t}$ criterion which is an attractive extension of maximin distance criterion.

The maximin distance criterion is a popular space-filling criterion  introduced in \cite{Johnson1990Minimax}. Let $\boldsymbol{D}=[\boldsymbol{x}_{1},\cdots,\boldsymbol{x}_{n}]^{T}$ denote a design matrix with $n$ runs and $q$ factors, where each row $ \boldsymbol{x}_{i}^{T}=(x_{i1},\cdots,x_{iq})$ is a design point and each column is a factor with $ i=1,\cdots,n$.  A maximin distance design is generated by maximizing the minimum inter-site distance, which is expressed as
\begin{equation}
\label{maximin distance}
\min_{\forall 1\leq i,j\leq n ,i\neq j}d(\boldsymbol{x}_{i},\boldsymbol{x}_{j}),
\end{equation}
where $d(\boldsymbol{x}_{i},\boldsymbol{x}_{j})$ is the distance between the design points $\boldsymbol{x}_{i}$  and $\boldsymbol{x}_{j}$ given by:
\begin{equation}
\label{inter-point distance}
d(\boldsymbol{x}_{i},\boldsymbol{x}_{j})= d_{ij}=\left(\sum_{k=1}^{q}|x_{ik}-x_{jk}|^{m}\right)^{1/m}, \text{ $m=1$ or 2}.
\end{equation}
Here $m=1$ and  $m=2$ are the rectangular and Euclidean distances,  respectively. In this article, we use the Euclidean distance. An extension of the maximin distance criterion \cite{Jin2016An} is given by
\begin{equation}
\label{phi_t value}
\phi_{t}=\left(\sum_{1 \leq i<j\leq n}(d_{ij})^{-t}\right)^{1/t},
\end{equation}
where $t$ is a positive integer. It is obviously that as $t\longrightarrow \infty $,  minimizing  (\ref{phi_t value})  is equivalent to maximizing (\ref{maximin distance}). The calculation of  $\phi_{t}$  is simpler compared with the maximin distance criterion. 

We search an optimal design by minimizing $\phi_{t}$, i.e.
\begin{equation}
\boldsymbol{D}^* = \arg \min_{\boldsymbol{D}}\phi_{t}(\boldsymbol{D}).
\end{equation}
Suppose that $\boldsymbol{D}$ is the design matrix of an ${\rm FSLHD}(n_{1},\ldots,n_{u};u, q)$. For $i=1,\ldots,u$, let $\boldsymbol{D}^{(i)}$ denote each slice of $\boldsymbol{D}$. We need to consider both the space-filling properties of the whole FSLHD and that of its slices. Consequently,	our goal is to find a maximin FSLHD that minimizes $\phi_{t}(\boldsymbol{D})$ for the entire design as well as $\phi_{t}(\boldsymbol{D}^{(i)})$  for each slice of $\boldsymbol{D}$ ($i=1,\ldots,u$). This is a multi-objective optimization problem. It is a common method in multi-objective problem to use a weighted average of all individual objectives.	It motivates us to develop a combined space-filling measurement (CSM) as follows:
\begin{equation}
\label{CSM}
\phi_ {\textrm{CSM}}(\boldsymbol{D})=w\phi_ {t}(\boldsymbol{D})+(1-w)\left(\sum_{i=1}^{u}\lambda_{i}\phi_ {t}(\boldsymbol{D}^{(i)})\right),
\end{equation}
where $\lambda_{i}=n_{i}/n$ , $\sum_{i=1}^{u}\lambda_{i}=1$, and $w \in (0,1)$. 	Since  run sizes of slices  are  $n_{1},\ldots,n_{u}$,  respectively, it makes sense that we take the weight of each slice to be $\lambda_{i}=n_{i}/n$,  for $i=1,\ldots,u$.  The weight $w$ is selected flexibly.  The space-filling property of the whole  FSLHD is more important,  hence we set $w=1/2$ in general. We can define a maximin distance FSLHD with respect to the CSM as the one which minimizes  (\ref{CSM}). 

Note that other space-filing criteria can also evaluate the FSLHD. For instance, we can obtain  an uniform FSLHD by minimizing a similar CSM given by
\begin{equation}
\begin{aligned}
\label{CSM2}
\phi_{\textrm{CSM}}(\boldsymbol{D})=w\phi_{\textrm {CD}_{2}}(\boldsymbol{D})+(1-w)\left(\sum_{i=1}^{u}\lambda_{i}\phi_{\textrm {CD}_{2}}(\boldsymbol{D}^{(i)})\right),
\end{aligned}
\end{equation}
where $\phi_{\textrm{CD}_{2}}$ is  the centered $L_{2}$-discrepancy  defined as
\begin{equation}
\begin{aligned}
\phi_{\textrm{CD}_{2}}
= &\left(\left(\frac{13}{12}\right)^{2}-\frac{2}{n}\sum_{i=1}^{n}\sum_{k=1}^{m}\left(1+\frac{1}{2}|x_{ik}-0.5|-|x_{ik}-0.5|^{2}\right)\right.\\
&\left.+\frac{1}{n^{2}}\sum_{i=1}^{n}\sum_{j=1}^{n}\sum_{k=1}^{m}\left(1+\frac{1}{2}|x_{ik}-0.5|+\frac{1}{2}|x_{jk}-0.5|-|x_{ik}-x_{jk}|\right)\right)^{1/2}
\end{aligned}
\end{equation}
proposed in \cite{Hickernell1998A}. 

\subsection{Exchange procedures for FSLHDs}
\label{subsection3.2}
In the literature, some optimization algorithms have widely used to construct an optimal LHD. They utilize an exchange procedure to iteratively search the optimal LHD  in  the design space. In this way,  two randomly  selected elements in an arbitrary column of an LHD are exchanged to generate a new design. The exchange procedure for an FSLHD is  more complex since the design should keep the sliced structure.
In this subsection, in the optimization process of an FSLHD,  we present three exchange  procedures to generate a neighbor of the design which do not change the sliced structure of the design. A neighbor of an FSLH corresponds to a neighbor of an FSLHD.
Let $\boldsymbol{M}$  be the FSLH($n_{1},\cdots,n_{u};u,q$) constructed in Section \ref{Section 2}. Let $\boldsymbol{M}_{\textrm{N}}$ denote a neighbor of an FSLH and let  $\boldsymbol{D}_{\textrm{N}}$ denote  a neighbor of an FSLHD.

\subsubsection{The within-slice exchange procedure}
\label{subsubsection3.2.1}
Given  an FSLH($n_{1},\cdots,n_{u};u,q$)$(\boldsymbol{M})$, let $n_{0}=0$, $r_{0}=0$, and $r_{i}=\sum_{k=0}^{i}n_{k}$,  for $i=1,\cdots,u$. The within-slice exchange procedure in the $i$th  slice of $\boldsymbol{M}$ is to  draw an  $\boldsymbol{M}_{\textrm{N}}$ by the following four steps:
\begin{enumerate} [itemindent=2em] 
	\item[\textbf{Step 1.}] Randomly select a column of $\boldsymbol{M}$.
	\item[\textbf{Step 2.}] Select any two different elements $d_{j}, d_{k}$ in $i$th  slice of the column, where $r_{i-1}+1\leq j,k\leq r_{i}$.
	\item[\textbf{Step 3.}] Exchange $d_{j}$ and $d_{k}$ in the same slice.
	\item[\textbf{Step 4.}] Generate $\boldsymbol{M}_{\textrm{N}}$.
\end{enumerate}

After this  procedure, the neighbor design  $\boldsymbol{M}_{\textrm{N}}$ still keeps the sliced structure. The within-slice exchange procedure is explained by an example about FSLH(4,6;2,2) illustrated in Figure \ref{Fig1}.
\begin{figure}[h]
	\centering
	\includegraphics[width = 0.4\textwidth]{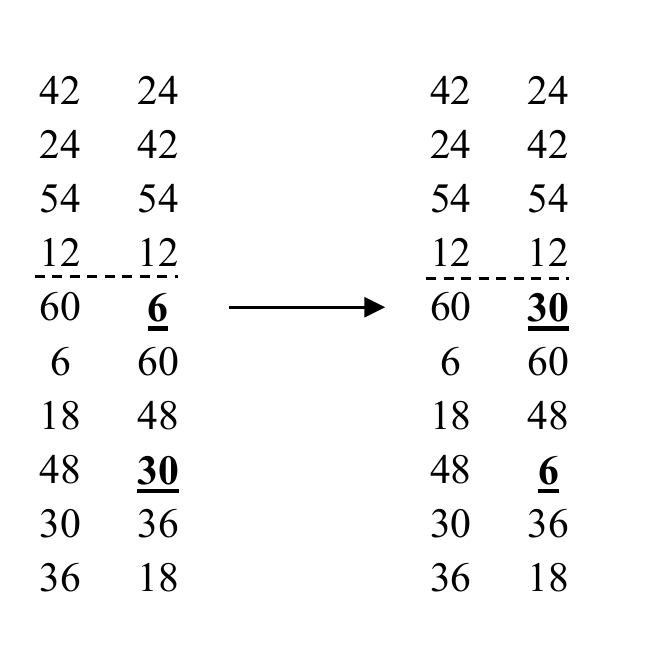}
	\caption{The within-slice exchange procedure. Left: The original FSLH(4,6;2,2). Right: The neighbor of  the  FSLH after  exchanging 6 and 30 in the second slice  and in the second column of the desigh.}
	\label{Fig1}
\end{figure}

\subsubsection{The different-slice exchange and the out-slice exchange procedures}
\label{section3.2.2}
We first give some notations.
Given an FSLH($n_{1},\cdots,n_{u};u,q$)$(\boldsymbol{M})$, let $\boldsymbol{M}(l:m,j)$ denote the $l$th to $m$th rows of the $j$th column, and $\boldsymbol{M}(l,j)$ denotes its $(l,j)$ element. For  $i=1,\cdots,u$, let $r_{i}=\sum_{k=0}^{i}n_{k}$,  and $B_{ij} =\boldsymbol{M}(r_ {i-1}+1:r_ {i},j)$ denotes $i$th slice in the $j$th column of $\boldsymbol{M}$  , where  $n_{0}=0$, $r_{0}=0$.  Define $E_{ij}=\{\boldsymbol{M}(r_{i}+1, j),\cdots,\boldsymbol{M}(n, j)\}$, where $i=1,\cdots,u-1$ and $n=\sum_{i=1}^{u}n_{i}$.  Let $A_{L}=\{1,\cdots, L\}$ denote a set of integers from 1 to $L$, where $L=$ lcm$(n_{1},\cdots,n_{u},n)$.
Set $B=\{\boldsymbol{M}(1,j),\cdots,\boldsymbol{M}(n,j) \}$. Let $C=A_{L}\setminus B$ denote $A$ minus $B$.

It is observed that elements of each slice on an FSLHD are fixed by the construction method in Section \ref{Section 2}. There are two situations. On the one hand, some elements in an arbitrary column of an FSLH from different slices are exchanged, and the resulting FSLH  does not change the sliced structure. On the other hand, some elements which are used to construct an column of an FSLH are not selected in $C$,  besides, we  exchange some elements  between $B_{ij}$ and $C$, and the resulting  FSLH  still keeps the  sliced structure. It motivates us to  propose a different-slice exchange  procedure and an out-slice exchange procedure to generate more diverse neighbors of the design. By the above ways, we can more easily find the optimal design. The detailed process of the two procedures are as follows.

\textbf{The different-slice exchange procedure in the $i$th slice:} we select any element $b$ of $B_{ij}$.  Let $\rho(b)$ be a subset of $E_{ij}$ satisfying that the generated FSLH still keeps the  sliced structure by exchanging  $b$ with arbitrary  $c$ in $\rho(b)$, where $i=1,\cdots,u-1$. \

\textbf{The out-slice exchange  procedure in the $i$th slice:} the elements in $C$ are called out-slice elements in a column of the design. For the same $b$,
let $\sigma (b)$ be a subset of $C$ satisfying that the obtained  FSLH still maintains the sliced structure through exchanging $b$ with arbitrary $c$ in $\sigma(b)$, where  $i=1,\cdots,u$. Let $\tau (b)= \rho(b) \cup \sigma(b)$. In the last slice, we only consider the out-slice exchange  procedure, thus $\tau (b) = \sigma(b)$.
For a set $R$, $R_{k}$ denotes the $k$th smallest element  of $R$. Suppose that $\boldsymbol{M}_{\textrm{N}}(1:n,j)$ is  a new column generated from  $\boldsymbol{M}(1:n,j)$.  Here, for $i =1,\cdots,u$, recall that  $t^{i}=L/n_{i}$.
We provide a method to generate $\tau(b)$ in the $i$th  slice of  $\boldsymbol{M}$ by the following  steps:
\begin{enumerate}  [itemindent=2em] 
	\item[\textbf{Step 1.}] Randomly select an element $b$ in $\boldsymbol{M}(r_ {i-1}+1:r_{i},j)$.
	\item[\textbf{Step 2.}] Generate a set $R=\{ (\lceil b/t^{i}\rceil - 1)\times t^{i}+1,(\lceil b/t^{i}\rceil - 1)\times t^{i}+2,\cdots, \lceil b/t^{i}\rceil  \times t^{i}\} \backslash \{b\}$.
	\item[\textbf{Step 3.}]If $i< u$, go to \textbf{Step 4}; else, go to \textbf{Step 5}.
	\item[\textbf{Step 4.}]For $k$ from $1$ to $ t^{i}-1$, if  $R_{k}$  belongs to  $\boldsymbol{M}(r_{i}+1:n,j)$, go to \textbf{Step 5}; else, go to \textbf{Step 6}.
	\item[\textbf{Step 5.}] Generate $\boldsymbol{M}_{\textrm{N}}(1:n,j)$  by exchanging  $b$ with $R_{k}$.  If $\boldsymbol{M}_{\textrm{N}}(1:n,j)$  still satisfies Theorem \ref{Theo 1}(ii), go to \textbf{Step 7}.
	\item[\textbf{Step 6.}] Generate $\boldsymbol{M}_{\textrm{N}}(1:n,j)$  by exchanging  $b$ with  $R_{k}$.
	If $\boldsymbol{M}_{\textrm{N}}(1:n,j)$ still satisfies Theorem \ref{Theo 1}(i), go to \textbf{Step 7}.
	\item[\textbf{Step 7.}] Add $R_{k}$  to $\tau(b)$.
\end{enumerate}
\begin{figure*}[h]
	\centering
	\subfigure[ Different-slice exchange procedure]{
		\label{Fig2a}
		\includegraphics[width = 0.4\textwidth]{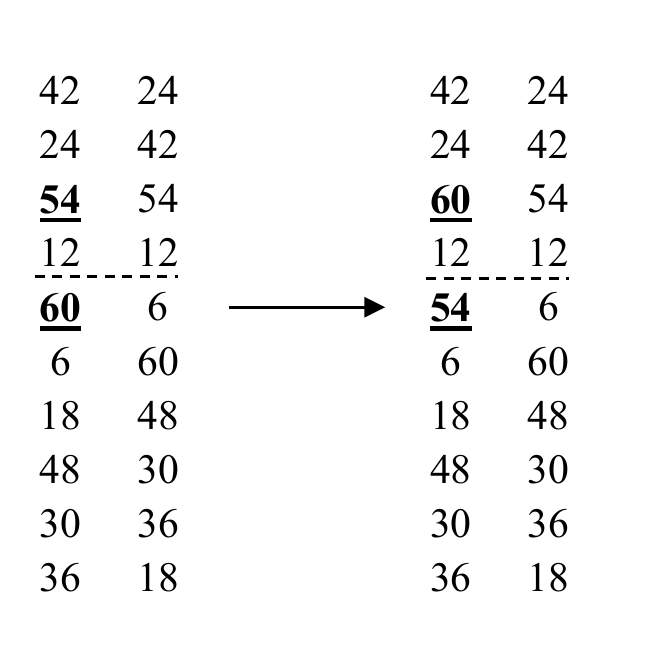}
	}
	\subfigure[Out-slice exchange procedure]{
		\label{Fig2b}
		\includegraphics[width = 0.4\textwidth]{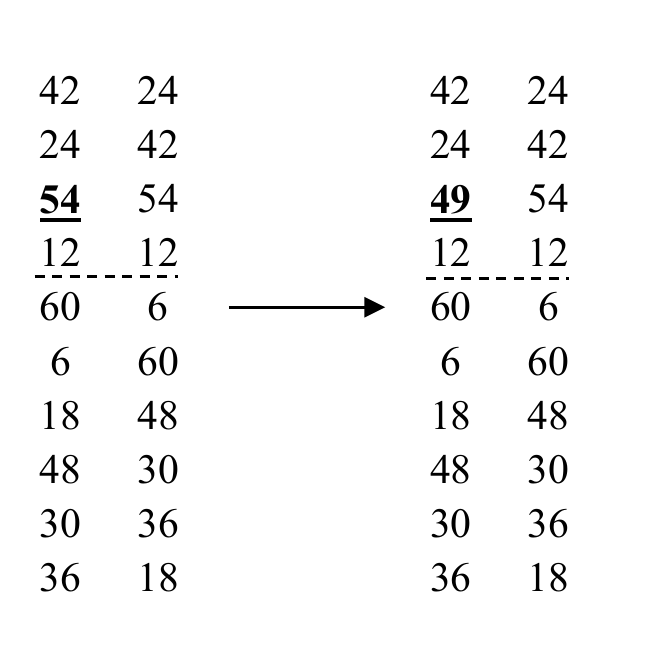}
	}
	\caption{(a): The different-slice procedure: exchange 54 in $\boldsymbol{M}(1:4,1)$ with 60 of $\tau (b)$ in $\boldsymbol{M}(5:10,1)$. (b): The out-slice procedure: replace 54 in $\boldsymbol{M}(1:4,1)$ with $ 49$ of $\tau (b)$ in  the out-slice elements.}
	\label{Fig2}
\end{figure*}
\textbf{Step 5} and \textbf{Step 6} are critical for generating $\tau(b)$. In \textbf{Step 5}, since both $b$ and $R_{k}$ are in  $\boldsymbol{M}(1:n,j)$,  $\boldsymbol{M}_{\textrm{N}}(1:n,j)$ still satisfies Theorem (i), when
we exchange $b$ with $R_{k}$.   Thus, we just guarantee that $\boldsymbol{M}_{\textrm{N}}(1:n,j)$ still satisfies Theorem \ref{Theo 1}(ii). In \textbf{Step 6}, it is clear that  changing $b$ with any element of  $R$ can guarantee that $\boldsymbol{M}_{\textrm{N}}(1:n,j)$ still satisfies Theorem \ref{Theo 1}(ii),
therefore, we only ensure that $\boldsymbol{M}_{\textrm{N}}(1:n,j)$  satisfies Theorem \ref{Theo 1}(i).

We introduce the different-slice exchange and the out-slice exchange procedures  in Figure \ref{Fig2}. For an FSLH(4,6; 2,2)$(\boldsymbol{M})$, we randomly select $b=54$ in $\boldsymbol{M}(1:4,1)$ in Figure \ref{Fig2a} , then $t^{1}=60/4=15$ and $R=\{45,46,\cdots,53,55,\cdots,60\}$. We obtain $\tau (b)=\{49,50,51,52,53,60\}$ after conducting the above steps.
In the different-slice exchange procedure, we can exchange 54 with 60 of  $\tau (b)$ in Figure \ref{Fig2a}.  In the out-slice exchange procedure,  we can replace 54 with 49 of $\tau (b)$  in Figure \ref{Fig2b}. It can be seen that the two resulting designs still keep the sliced structure.

\subsection{A sliced ESE algorithm for generating optimal FSLHDs}
\label{subsection3.3}
Researchers utilize various optimization algorithms to construct  optimal LHDs, such as  the enhanced stochastic evolutionary (ESE) algorithm \cite{Jin2016An}, the simulated annealing search algorithm \cite{Morris1995Exploratory}, the column wise-pairwise swap algorithm \cite{Ye2000Algorithmic}, the threshold accepting algorithm \cite{Fang2002Centered}, the particle swarm algorithm \cite{Chen2013Optimizing,Kennedy1995Particle}, and the genetic algorithm \cite{Liefvendahl2006A,Bates2004Formulation}. All the above algorithms can be extended to  optimize FSLHDs.  In this paper, we choose the ESE algorithm as a basic algorithm to find optimal FSLHDs. 

The ESE algorithm can quickly construct an optimal LHD in a limited calculative resource and it can also move from a locally optimal LHD.   The ESE algorithm includes double loops, i.e., an inner loop and an outer loop. The  inner loop  randomly generates neighbors of the design  by the exchange procedures and decides  whether  to  accept  them on the basis of an acceptance criterion.  The outer loop aims to adjust the threshold $T_{\textrm{h}}$ in the acceptance criterion through the performance of the inner loop, so the outer loop can control the whole optimization process. When extending the  ESE algorithm for  searching an optimal FSLHD,  we need to consider  the sliced structure of an FSLHD.  Thus,  based on the  three exchange procedures in Section \ref{subsection3.2},  we develop a sliced enhanced stochastic evolutionary (SESE) algorithm which contains double loops in \cite{Jin2016An} and the slice by slice loop  proposed in this article. Such a combined algorithm can suit the sliced structure of the FSLHD.  It is a dynamic optimization approach to optimize the FSLHD  slice by  slice.
This  algorithm can search the optimal FSLHD by minimizing the CSM. Algorithm \ref{Algorithm 1} describes  the SESE algorithm.

\textbf{The slice by slice loop:}  we start with  an initial FSLHD denoted by $\boldsymbol{D}_{0}$. When we optimize the first slice of the design, $\boldsymbol{D}_{0}$ is an initial design in the outer loop. When optimizing  the $i$th ($i \ge 2$) slice of the design, we  make $\boldsymbol{D}_{\textrm{best}}$, generated from outer loop in the $(i-1)$th slice optimization, as the initial FSLHD. It means that a new slice optimization is based on the previous slice  optimization  until the last slice.	The parameter settings of the inner loop and the outer loop have been discussed in \cite{Jin2016An}. The parameter settings are similar in  \cite{Jin2016An} for the construction method of an FSLHD.

\textbf{The inner loop:}
the iterations $P$  should be set larger for larger problems but no larger than 100. The acceptance criterion is $\phi_ {\textrm{CSM}}(\boldsymbol{D}_{\textrm{N}})-\phi_ {\textrm{CSM}}(\boldsymbol{D})\leq  T_{h} \cdot random(0,1)$, where $random(0,1)$ generates uniform numbers between 0 and 1.  According to the discussion in \cite{Jin2016An}, if the settings of $I_{1},I_{2}$, and $I_{3}$ are too large, it can appear the locally optimal design for designs with small run sizes and low efficiency for designs with large run sizes. 
Let $I_{1}=$ min $(n_{\textrm{in-slice}}/5,50)$, where $n_{\textrm{in-slice}}$ is the number of all possible neighbors of the design in within-slice exchange procedure. Let $n_{\textrm{diff-slice}}$ and $n_{\textrm{out-slice}}$ be the number of all possible neighbors of the design for the different-slice exchange procedure and the out-slice exchange procedure, respectively. According to the construction method of the FSLHD, we can clearly know that
$n_{\textrm{diff-slice}}$ and $n_{\textrm{out-slice}}$ are usually small, therefore it is reasonable  to set $I_{2}+I_{3}=$ \textrm{min}$(n_{\textrm{diff-slice}}+n_{\textrm{out-slice}},50)$.

\textbf{The outer loop:} 
The setting of $T_{\textrm{h}}$ is a small value, i.e., $T_{\textrm{h}_{0}}= 0.005\times $ (criterion value of the initial design). The  threshold $T_{\textrm{h}}$ is adjusted by an improving process and an exploration process. 
After the Inner Loop, if the search process has improvement, then go to the improving process, while if the search process has no improvement, then  go to the exploration process. We adjust $T_{\textrm{h}}$ by the same way  in \cite{Jin2016An} as follows.
In the improving process,  when $T_{\textrm{h}}$ keeps on a small value, only slightly worse design or better design will be accepted. The parameter $P$ is the number of tries in the inner loop. The  threshold $T_{\textrm{h}}$  is adjusted by the acceptance ratio
$p_{\textrm{ac}}=n_{\textrm{ac}}/P$ ($n_{\textrm{ac}}$, the number of the accepted designs) and the improvement ratio $p_{\textrm{im}}=n_{\textrm{im}}/P$  ($n_{\textrm{im}}$, the number of the improved designs).
For $flag_{\textrm{im}}=1$,   if $p_{\textrm{ac}}>0.1$ and $p_{\textrm{im}}< p_{\textrm{ac}}$, let $T_ {\textrm{h\_try}}=\beta_{1}T_{h}$, where $0< \beta_{1} <1$; if
$p_{\textrm{ac}}>0.1$ and $p_{\textrm{im}}= p_{\textrm{ac}}$,  let $T_ {\textrm{h\_try}}=T_{\textrm{h}}$; otherwise, $T_ {\textrm{h\_try}}=T_{\textrm{h}}/\beta_{1}$.  We set $\beta_{1}=0.8$, since it appears to do well in all tests. \noindent  In the exploration  process, $T_{\textrm{h}}$  is adjusted by $p_ {\textrm{ac}}$.
For $flag_{\textrm{im}}=0$,
let $T_{\textrm{h\_try}}=T_{\textrm{h}}/\beta_{2}$ and $T_{\textrm{h}}$ will be quickly increased until $p_{\textrm{ac}}>0.8$; if $p_{\textrm{ac}}>0.8$, let $T_{\textrm{h\_try}}=T_{\textrm{h}}\beta_{3}$ and $T_{\textrm{h}}$ will be quickly decreased until $p_{\textrm{ac}}<0.1$, where $0< \beta_{2},  \beta_{3}<1$. On the basis of some tests, the settings of $\beta_{2}=0.7$ and $\beta_{3}=0.9$ perform well.
Increasing rapidly $T_{\textrm{h}}$ (more worse designs can be accepted) is useful to go away from a locally optimal design.
After going away from a locally optimal design, decreasing slowly  $T_{\textrm{h}}$ helps to search better designs.  An improved design is found by repeating the exploration process, then we go into the improving process.  The $tol$ is a small fixed value, i.e., $tol=0.1$. The stopping criterion $N$  is  set to be 10 in our procedure, which is selected flexibly.
\begin{algorithm*}[H]
	\caption{The  SESE algorithm }
	\label{Algorithm 1}
	\KwIn{An initial design $\boldsymbol{D}_{0}$.}
	Initialization: $\boldsymbol{D}_{{\textrm best}} = \boldsymbol{D}_{0}$.\\
	\For{$i=1,\cdots,u$}{\textbf {Slice-by-Slice Loop}:\\
		$\boldsymbol{D}_{0}=\boldsymbol{D}_{\textrm{best}}$. \\
		\textbf {Outer Loop}:\\
		Initialization: $\boldsymbol{D}=\boldsymbol{D}_{0}$, $\boldsymbol{D}_{\textrm{best}}=\boldsymbol{D}$, $T_{\textrm{h}}=T_{\textrm{h}_{0}}$.\\
		\For{$j=1,\cdots,N$}{
			$\boldsymbol{D}_{\textrm{old}\_{\textrm{best}}}=\boldsymbol{D}_{\textrm{best}}$, \\
			$n_{\textrm{ac}}=0$, $n_{\textrm{im}}=0$.\\
			\textbf {Inner Loop}:\\
			\For{$k=1,\cdots,P$}{
				In the $i$th slice of the design, randomly choose $I_{1}$, $I_{2}$ and $I_{3}$ neighbors of  the design  by \indent the within-slice exchange, the different-slice exchange, and the out-slice exchange procedures within column
				$( k \mod  q)+1 $ , respectively. Select the best design $\boldsymbol{D}_{\textrm{N}}$ from $(I_{1}$ +$I_{2}$ +$I_{3})$  designs.\\
				\If{$\phi_ {{\textrm the  CSM}}(\boldsymbol{D}_{\textrm{N}})-\phi_ {\textrm{CSM}}(\boldsymbol{D})\leq  T_{\textrm{h}} \cdot random(0,1)$}
				{
					$\boldsymbol{D}=\boldsymbol{D}_{\textrm{N}}$,\\
					$n_{\textrm{ac}}= n_{\textrm{ac}}+1$.\\
					\If{$\phi_ {\textrm{CSM}}(\boldsymbol{D}) < \phi_ {\textrm{CSM}}(\boldsymbol{D}_{\textrm{best}})$}
					{
						$\boldsymbol{D}_{\textrm{best}}=\boldsymbol{D}$, \\
						$n_{\textrm{im}}=n_{\textrm{im}}+1$.\\
					}
				}
			}
			\eIf{$\phi_ {\textrm{CSM}}(\boldsymbol{D}_{\textrm{old}\_{\textrm{best}}})-\phi_ {\textrm{CSM}}(\boldsymbol{D}_{\textrm{best}})> tol$}
			{
				$flag_{\textrm{im}}=1$.\\
			}
			{ $flag_{\textrm{im}}=0$.\\
			}
			Update $T_{\textrm{h}}$ according to $flag_{\textrm{im}}$, $n_{\textrm{ac}}$, $n_{\textrm{im}}$.
		}
	}
	\KwOut{$\boldsymbol{D}_{\textrm{best}}$.}
\end{algorithm*}

\subsection{Efficient two-part algorithm for generating space-filing FSLHDs}
\label{subsection3.4}
For an FSLHD with $n$ runs and $q$ factors, when $n$ and $q$ are small, the SESE algorithm is more efficient and provides much better resulting designs.	However,  if $n$ and $q$ are getting larger,  the convergence of  the SESE algorithm  may be slow because of the large  number of  neighbors of the design.  In this subsection, we consider  a  similar strategy which is broadly applied in \cite{Ba2015Optimal,Chen2017Flexible} to avoid the poor space-filling designs and improve the efficiency when $n$ and $q$ are large.

We first give the strategy for our proposed design as follows: for an FSLHD$(n_ {1},\cdots,n_ {u};u,q)$ and $n=\sum_{i=1}^{u}n_{i}$,
the $q$-dimensional input region in the $i$th slice of FSLHD is partitioned  into $n_ {i}^{q}$ cells through the $\underbrace{n_ {i}\times,\cdots,\times n_{i}}_{q}$ coarser grid ($i=1,\cdots,u$). Since  run sizes $n_{i}$ of each slice are different, the number  $n_ {i}^{q}$  of divided cells is different. 
It is possible that some of $n$ design points sampled from the  $n_ {i}^{q}$ cells can fall into the same cell. If $n_{i}^{q} > n$, we need to  avoid design points  falling into in the same cells and ensure the design still an FSLHD. 

We give a detailed process of the above strategy. Let $\bm{1}\{\cdot\}$  denote the indicator function. For an $n\times q$  matrix $\boldsymbol{A}=[\boldsymbol{a}_{1},\cdots,\boldsymbol{a}_{n}]^{\textrm{T}}$, denote
\begin{equation}
\label{indicator function}
P(\boldsymbol{A})=\sum_{1\leq i < j \leq n}\bm{1}\{d(\boldsymbol{a}_{i}, \boldsymbol{a}_{j})=0\},
\end{equation}
where $\bm{1}\{d(\boldsymbol{a}_{i}, \boldsymbol{a}_{j})=0\}=1$ if
$d(\boldsymbol{a}_{i},\boldsymbol{a}_{j})=0$ is true and $\bm{1}\{d(\boldsymbol{a}_{i}, \boldsymbol{a}_{j})=0\}=0$ otherwise. 
It is clear that some rows of matrix $A$ are the same if $P(\boldsymbol{A})>0$. We call the same rows as repeating rows which fall into the same cell. We can find repeating rows of a design  by (\ref{indicator function}).
For FSLH($n_{1},\cdots,n_{u};u,q$) ($\boldsymbol{{M}}$), recall that $t^{i}=$lcm $(n_{1},\cdots,n_{u},n)/n_{i}$, for $i=,\dots,u-1$. 
Let $\boldsymbol{M}^{i}=\lceil \boldsymbol{M}/t_{i}\rceil$. If $n_{i}^{q} > n$ and $P(\boldsymbol{M}^{i})=\sum_{1\leq i < j <n}\textbf{1}\{d( \boldsymbol{a}_{i}, \boldsymbol{a}_{j})=0\}>0$, then  the matrix has repeating rows.

Let us look at the following example of a design matrix FSLH (4,6;2,2) ($\boldsymbol{M}$)
$$\boldsymbol{M}=\left(
\begin{array}{cccccccccc}
54 & 12 & 24 & 42 & 60 &  30  & 6 &18& 48& 36\\
54 & 42 & 12 & 24 & 18 &  6  & 36 & 48 & 60& 30
\end{array}
\right)^{T}.$$
By (\ref{matrix FSLH1}) and (\ref{matrix FSLH2}), both $\boldsymbol{M}^{1}=\lceil \boldsymbol{M}/15 \rceil$ and $\boldsymbol{M}^{2}=\lceil \boldsymbol{M}/10 \rceil$  have repeating rows, which indicates that $P( \boldsymbol{M}^{1} ) =4>0$ and $P(\boldsymbol{M}^{2}) =1>0$. The FSLH corresponding to the  design under different divided cells is depicted in Figure \ref{fig3a} and Figure \ref{fig3b}, respectively.
The design points of repeating rows fall into the same cell (filled with blue).
\begin{figure}[H]
	\label{fig3}
	\centering 
	\subfigure[]{
		\begin{minipage}[t]{0.45\linewidth}
			\centering
			\includegraphics[width=1.22\textwidth]{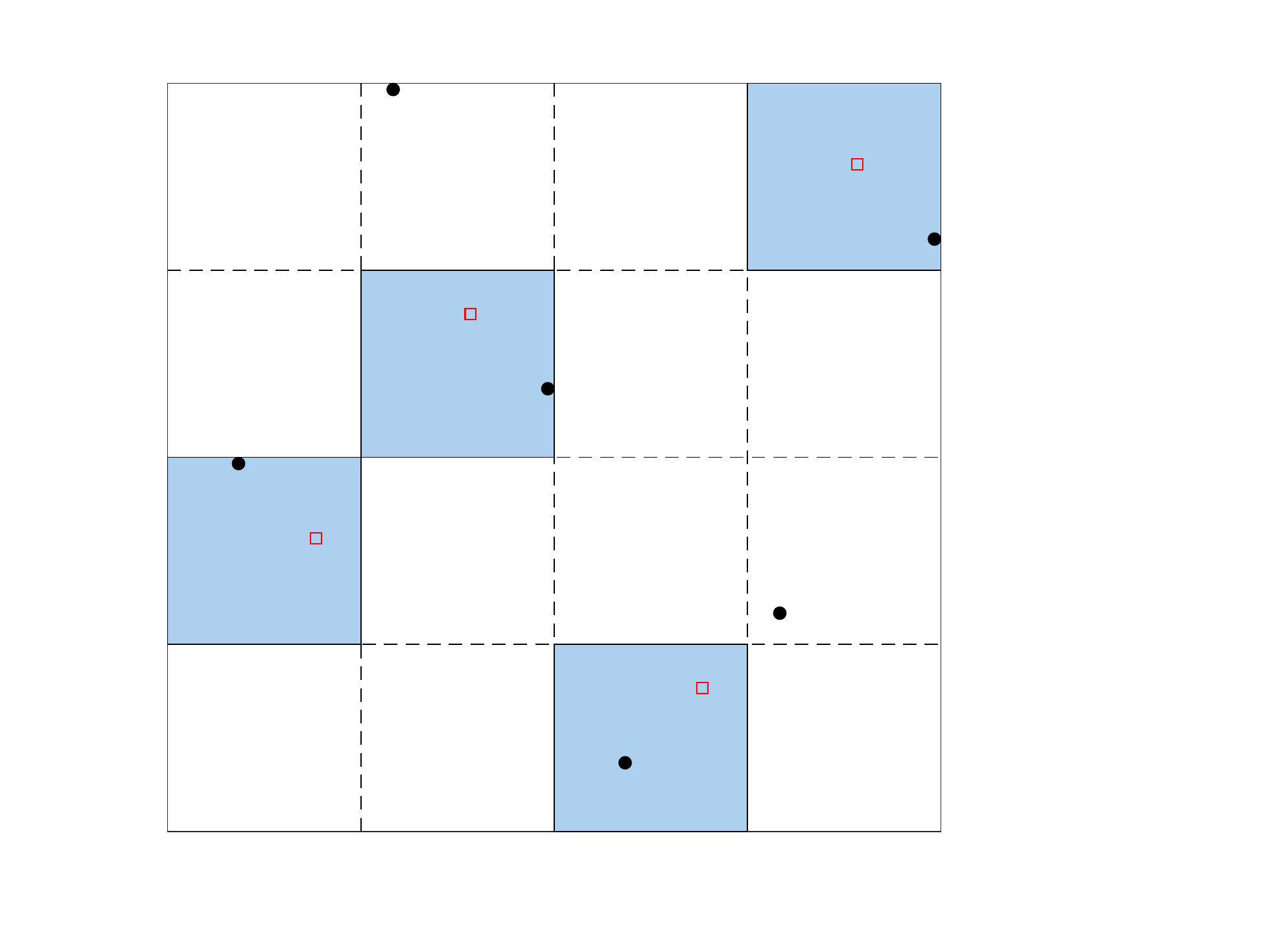} 
			\label{fig3a}
	\end{minipage}}
	\subfigure[]{
		\begin{minipage}[t]{0.45\linewidth}
			\centering
			\centering \includegraphics[width=1.2\textwidth]{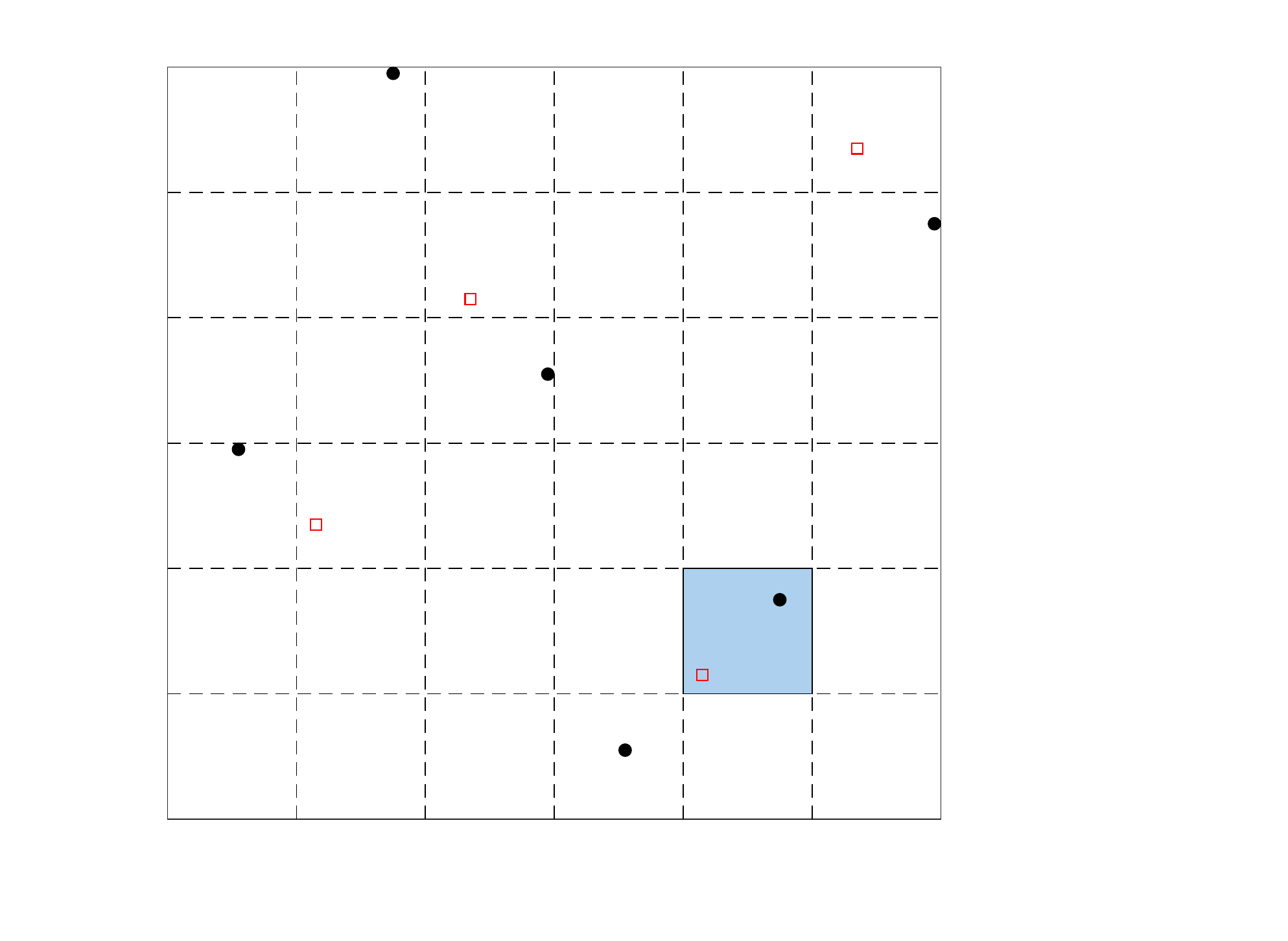}
			\label{fig3b}
		\end{minipage}
	}
	\centering
	\caption{ A poor design with some repeating rows. (a) :  The 2-dimensional input region  is divided into $4\times4$ cells, and some repeating rows lie in the same cell.  (b) : The 2-dimensional input region  is divided into $6\times6$ cells, and some repeating rows lie in the same cell.}
\end{figure}

\begin{equation}
\label{matrix FSLH1}
\boldsymbol{M}^{1}=\left(
\begin{array}{cccccccccc}
4  &3	&1	&2	&2	&1	&3	&4	&4	&2\\
4	&1&2	&3	&4	&2&1	&2	&4	&3
\end{array}
\right)^{T},
\end{equation}

\begin{equation}
\label{matrix FSLH2}
\boldsymbol{M}^{2}=\left(
\begin{array}{cccccccccc}
6  &5	&2&3	&2	&1	&4	&5	&6&3\\
6	&2&3&5	&6	&3&1	&2	&5	&4
\end{array}
\right)^{T}.
\end{equation}

To make the design with better space-filling properties, we consider to put all the points  into the different cells.
Therefore, we can select randomly a column of  the repeating rows, and conduct a within-slice exchange procedure in the randomly chosen column of the same slice, until $P(\boldsymbol{M}^{1})=0$ and $P(\boldsymbol{M}^{2})=0$. The resulting design are shown in Figure \ref{fig4a} and Figure \ref{fig4b}, respectively, in which  all the points fall into the different cells. In summary, the above strategy can quickly eliminate the undesirable designs that contain repeating rows.
\begin{figure}[H]
	\label{fig4}
	\centering
	\subfigure[]{
		\begin{minipage}[t]{0.45\linewidth}
			\centering
			\includegraphics[width=1.22\textwidth]{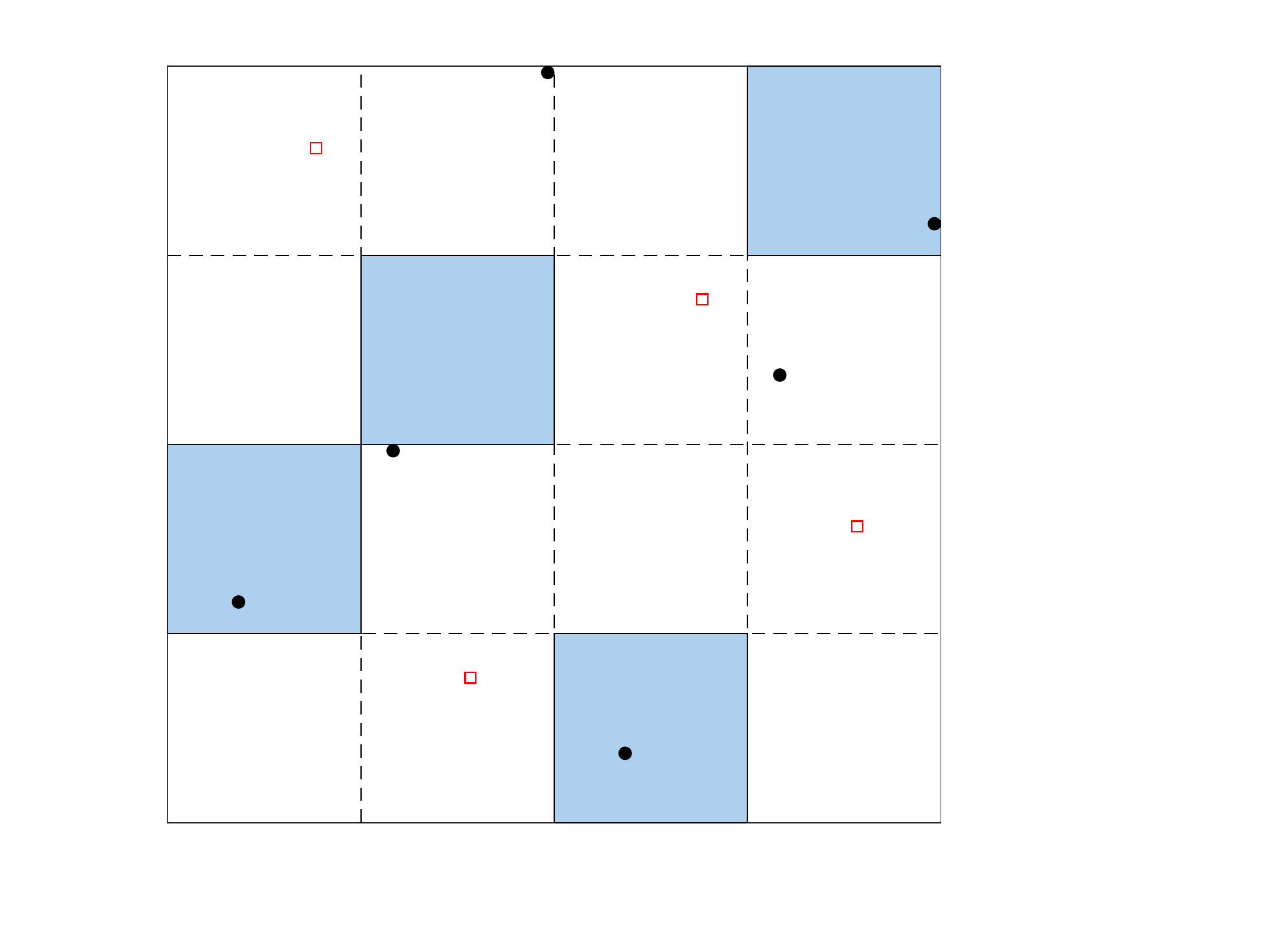}
			\label{fig4a}
	\end{minipage}}
	\subfigure[]{
		\begin{minipage}[t]{0.45\linewidth}
			\centering
			\centering \includegraphics[width=1.2\textwidth]{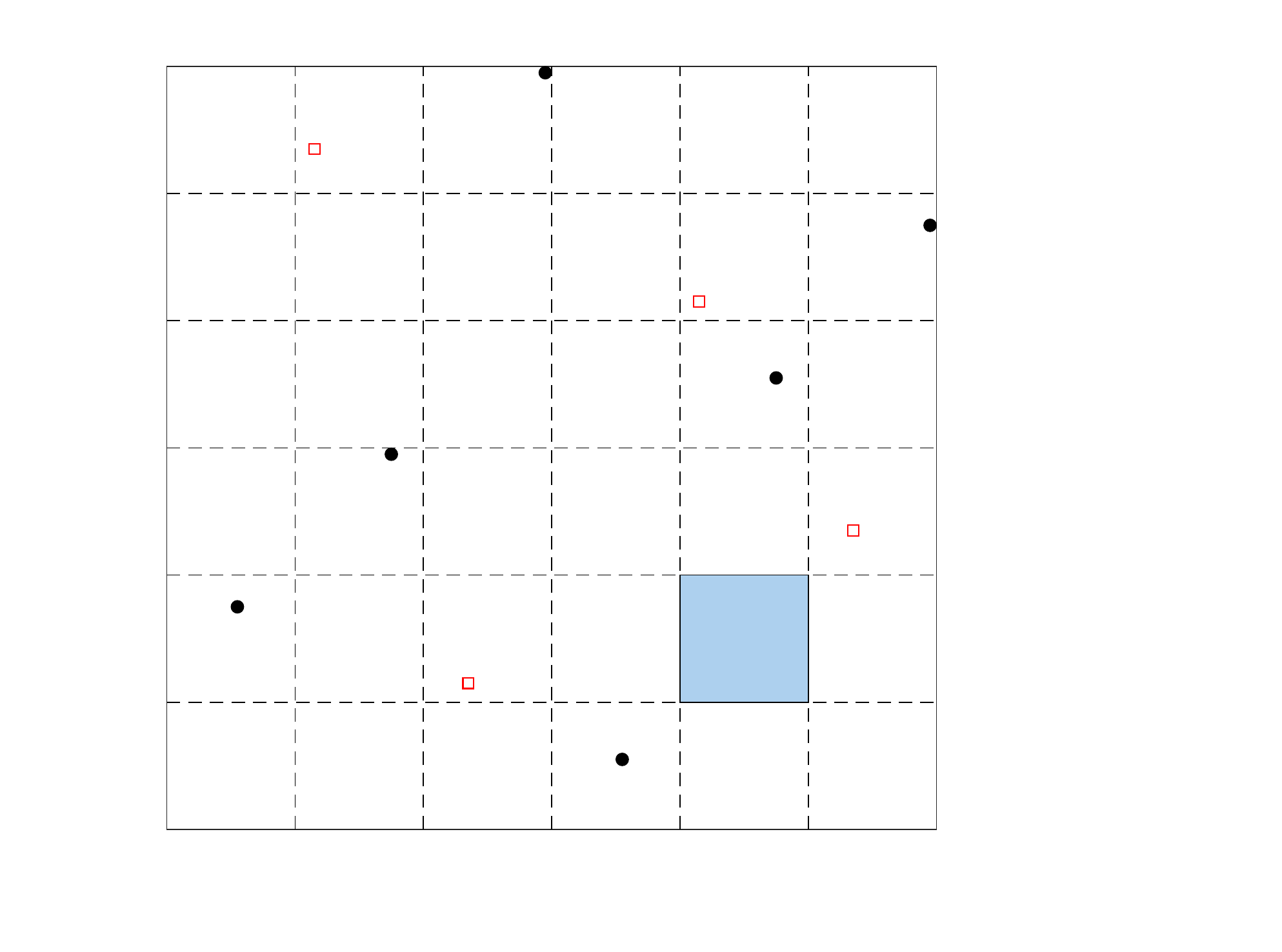}
			\label{fig4b}
		\end{minipage}
	}
	\centering
	\caption{A resulting design with design points spread out. (a) : The $n$ design points fall into different cells in $4\times4$ grid. (b) : The $n$ design points fall  into different cells in $6\times6$ grid.}
\end{figure}

Given an FSLHD with large $n$ runs and $q$ factors, we develop an efficient two-part algorithm for finding the space-filling FSLHDs  based on the above strategy. Without loss of generality, assume  $n_{1},\cdots,n_{u}$ with $n_{1}\leq n_{2}\leq ,\cdots,\leq n_{u}$. Recall that
$\boldsymbol{D}_{\textrm{N}}$  denotes a neighbor of FSLHD($\boldsymbol{D}$) and  $\boldsymbol{M}_{\textrm{N}}$  denotes a neighbor of FSLH($\boldsymbol{M}$).
This algorithm is provided as follows:\\
\textbf{Part-I algorithm}

The Part-I algorithm is useful for speeding up  by removing some undesirable designs from neighbors of the design. It starts with an initial FSLH ($n_{1},\cdots,n_{u};u,q$)$(\boldsymbol{M}_{0})$. According to the run sizes of the design, it can be stopped by some flexible stopping criterions. In our proposed algorithm, when 100 iterations have been operated, we stop the program.
The  algorithm  is given below:
\begin{enumerate}  [itemindent=2em] 
	\item[\textbf{Step 1.}] Let $\boldsymbol{M}=\boldsymbol{M}_{0}$, and set the index $i=1$.
	\item[\textbf{Step 2.}] If  $P(\lceil \boldsymbol{M}/t_{i} \rceil)=0$, compute $\phi_ {\textrm {CSM}}(\boldsymbol{D})$,  go to \textbf{Step 5}.
	\item[\textbf{Step 3.}] If $n_{i}^{q}>n$, randomly choose a repeating row of  $\lceil \boldsymbol{M}/t_{i}\rceil$, and randomly choose  another row in the same slice.  We exchange  two elements which corresponds to a randomly selected column of the two rows.  Generate an $\boldsymbol{M}_{\textrm{N}}$;  else,  go  to  \textbf{Step 5}.
	\item[\textbf{Step 4.}] If $P(\lceil \boldsymbol{M}_{\textrm{N}}/t_{i} \rceil)<P(\lceil \boldsymbol{M}/t_{i} \rceil)$, $\boldsymbol{M}=\boldsymbol{M}_{\textrm{N}}$,  go back to  \textbf{Step 2};  else, go back to   \textbf{Step 3}.
	\item[\textbf{Step 5.}]Under the condition of $P(\lceil \boldsymbol{M}_{\textrm{N}}/t_{i} \rceil) =0$, generate an  $\boldsymbol{M}_{\textrm{N}}$ by the within-slice  procedure in the $i$th slice of $\boldsymbol{M}$,  then calculate $\phi_ {\textrm{CSM}}(\boldsymbol{D}_{\textrm{N}})$.
	\item[\textbf{Step 6.}]  If $\phi_ {\textrm{CSM}}(\boldsymbol{D}_{\textrm{N}}) < \phi_ {\textrm{CSM}}(\boldsymbol{D})$, then replace $ \boldsymbol{M}$ by $\boldsymbol{M}_{\textrm{N}}$;  else,  go back to   \textbf{Step 3}.
	\item[\textbf{Step 7.}] Repeat  \textbf{Step 4} and  \textbf{Step 5} until meeting the stopping criterion.
	\item[\textbf{Step 8.}] Update  $ i=i+1$, if $i < u$,  go to \textbf{Step 2}; else,  output $\boldsymbol{M}_{\textrm{best}}$ = $\boldsymbol{M}$.
\end{enumerate}

\noindent\textbf{Part-II algorithm}

We take  $\boldsymbol{M}_{\textrm{best}}$ from the Part-I  algorithm as an initial design in the Part-II algorithm.
We generate a  neighbor of FSLHD based on the different-slice or the out-slice exchange procedures  in the Part-II algorithm.  For $i=1,\cdots,u$,  if  $q$ is large and $n_{i}^{q}>>n$, then the $n$ design points is very sparse by the  Part-I  algorithm,  consequently, the Part-II  algorithm brings smaller  effect for the  space-filing properties of the design $\boldsymbol{D}$. Therefore, in this case,  the Part-I  algorithm is more important, and we can skip the Part-II algorithm and focus on the Part-I algorithm. We also can stop the running of Part-II algorithm  when the repeating times arrive 100.

\begin{enumerate}  [itemindent=2em] 
	\item[\textbf{Step 1.}] Let $\boldsymbol{M}=\boldsymbol{M}_{\textrm{best}}$, and set the index $i=1$.
	\item[\textbf{Step 2.}] In the $i$th slice of $\boldsymbol{M}$, generate an $\boldsymbol{M}_{\textrm{N}}$ by the different-slice or the out-slice exchange procedures under the condition of $P(\lceil \boldsymbol{M}_{\textrm{N}}/t_{i} \rceil) =0$.
	\item[\textbf{Step 3.}] If $\phi_ {\textrm{CSM}}(\boldsymbol{D}_{\textrm{N}}) < \phi_ {\textrm{CSM}}(\boldsymbol{D})$,  replace $ \boldsymbol{M}$ by $\boldsymbol{M}_{\textrm{N}}$.
	\item[\textbf{Step 4.}] Repeat \textbf{Step 2} and  \textbf{Step 3} until meeting the stopping criterion.
	\item[\textbf{Step 5.}]Update $i=i+1$, if $i<u$,  go to \textbf{Step 2}; else,  output $\boldsymbol{M}_{\textrm{best}}$ = $\boldsymbol{M}$.
\end{enumerate}
\section{Simulation results}
\label{Simulation results}
In this section, the first example illustrates that  the SESE  algorithm has good properties. In our second example, for the design with large runs and factors, we give some comparative studies, which show the efficient two-part algorithm with desirable performance. In these examples, we select the combined space-filling measurement (\ref{CSM}). For simplicity, we only consider any column of FSLHDs with all in (\ref{eq1a}) being 1/2 when updating (\ref{CSM}) in our proposed algorithm.

\subsection{Example 1}
As depicted in Figure \ref{figure5a}, we randomly generate an initial design 
FSLHD($4,8,12$; 3,2) with optimal univariate uniformity.  It is clear that the space-filling property is poor for the whole design and for each slice of the design. Based on the combined space-filling measurement  $\phi_{\textrm{CSM}}$ ($t=50$ ) in (\ref{CSM}), we improve the space-filling property of the design by the SESE algorithm ($P= 20$). The initial design with $\phi_{\textrm{CSM}}= 14.4740$  is showed in Figure \ref{figure5a}. After
operating the SESE algorithm, the  resulting design with  $\phi_{\textrm{CSM}}=5.7958$  in Figure \ref{figure5b} has good space-filling property over the experiment region. 
\begin{figure*}[h]
	\centering
	\subfigure[$\phi_{\textrm{CSM}}= 14.4223$]{
		\begin{minipage}[t]{0.45\linewidth}
			\centering
			\includegraphics[width=1.2\textwidth]{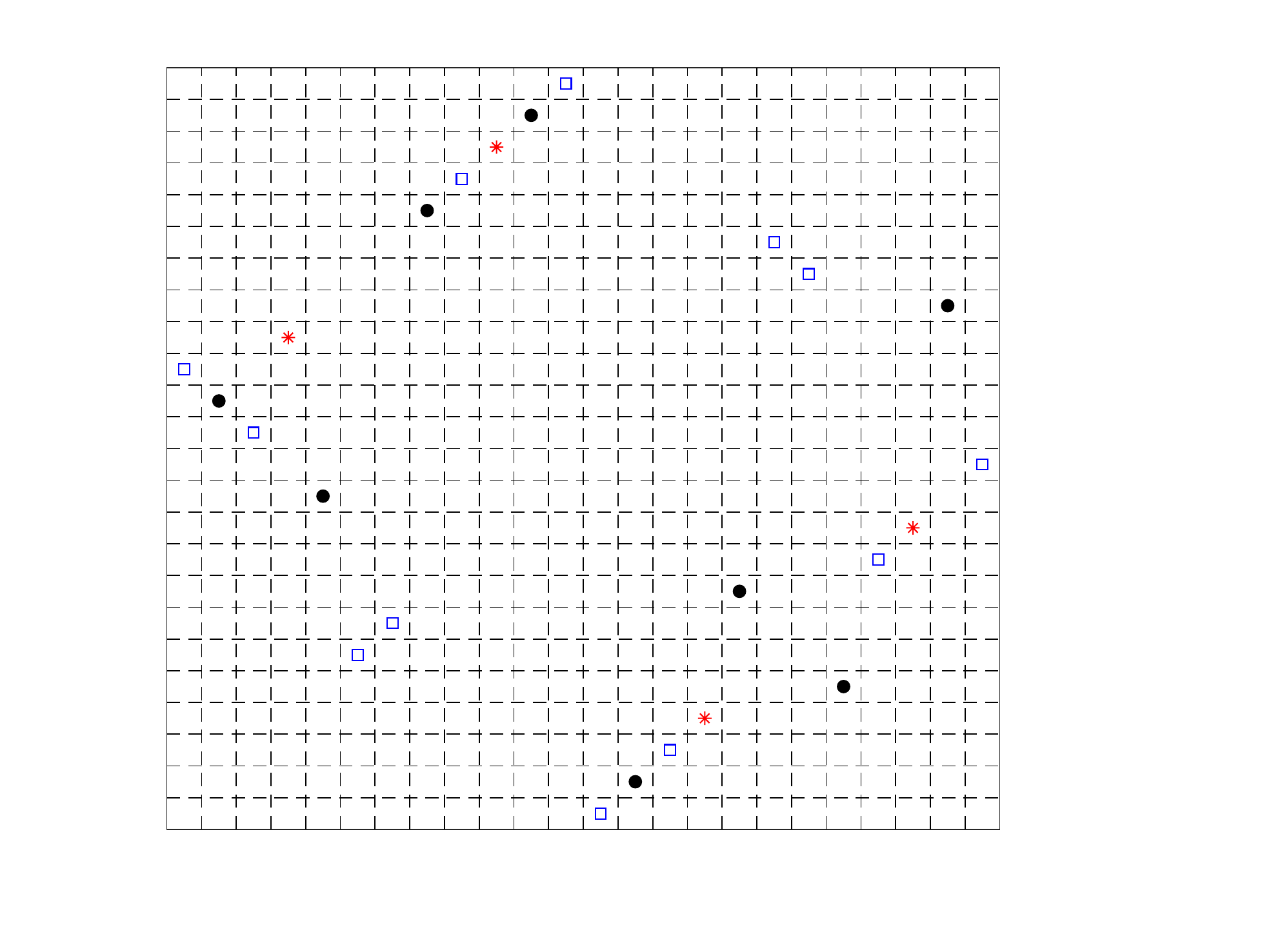}
			\label{figure5a}
		\end{minipage}
	}
	\subfigure[$\phi_{\textrm{\textrm{CSM}}}=5.6844$]{
		\begin{minipage}[t]{0.45\linewidth}
			\centering
			\includegraphics[width=1.25\textwidth]{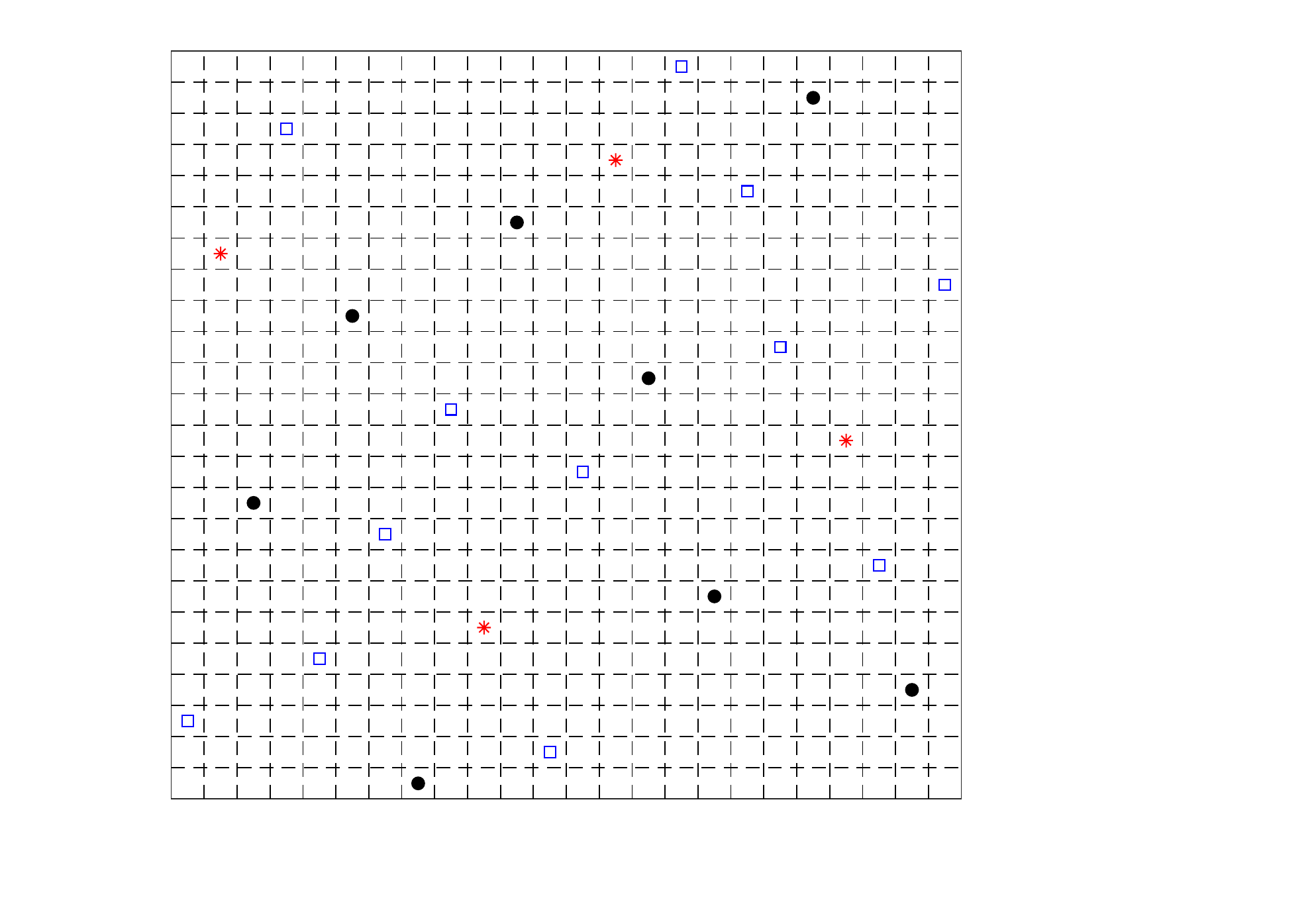}
			\label{figure5b}
		\end{minipage}
	}
	\centering
	\caption{ Optimization results for finding optimal FSLHD.
		$(a)$: The initial FSLHD$(4,8,12;3,2)$ in Example 1, different types of points denote difference three slices, respectively. $(b)$: The optimization results of  FSLHD$(4,8,12;3,2)$  after using the SESE algorithm.}
	\vspace{-0.2cm}
	\label{figure3}
\end{figure*}

For comparison, we  randomly  generate  FSLHDs by the method in  Section \ref{Section 2} for 100000 times and calculate the corresponding values of  $\phi_{\textrm{CSM}}$. The resulting FSLHDs with good space-filling properties account for a small portion of 100000 FSLHDs.
The smallest value of  $\phi_ {\textrm{CSM}}$  from the 100000  FSLHDs is 6.8387, while the  value of  $\phi_ {\textrm{CSM}}$  in Figure \ref{figure5b} is 5.7958.  The values between 6.8387 and 8 of $\phi_ {\textrm{CSM}}$ account for 0.22 percent of all  $\phi_ {\textrm{CSM}}$ values  from the 100000  FSLHDs. It can be seen that the SESE algorithm is useful to improve the space-filling property of the whole design and each slice of the initial design.

\subsection{Example 2}
To show the  good  performance of the two-part algorithm for design with large runs and factors, we compare  its performance with the SESE algorithm. We repeat each algorithm for 100 times with a random initial design FSLHD in Table \ref{table1}  (SD, standard deviation  and AT, average time ).  In the SESE algorithm, we set stopping rules $P=30$ for FSLHD$(15,30; 2,2)$  and $P=40$ for  FSLHD$(5,10,15,30;4,6)$. Conclusion can be obtained from Table \ref{table1} as follows:
\begin{enumerate}
	\item[\textbf{(i)}]  The average time of the operation shows that the two-part algorithm has higher efficiency than the SESE algorithm.
	\item[\textbf{(ii)}]  For FSLHD$(5,10,15,30;4,6)$, since  $n_{i}^{q}>>n$ with $i=1\cdots4$, the $\phi_{\textrm{CSM}}$   values of the resulting FSLHD  from  Part-I algorithm are desirable when compared with those values from two-part algorithm. However, the results of  Part-I algorithm for FSLHD$(15,30; 2,2)$ are not good enough. Therefore, if  $q$ is large and $n_{i}^{q}>>n$, we need not to run the Part-II algorithm.
	\item[\textbf{(iii)}] Based on  the  $\phi_{\textrm{CSM}}$ values of the resulting FSLHDs, we can see that the $\phi_ {\textrm{CSM}}$ values are close to each other.
	It can be concluded that both the two-part algorithm and the SESE  algorithm are stable and do not heavily rely on the initial design.
\end{enumerate}
\begin{table*}[htbp]
	\centering
	\caption{Performance of the efficient two-part algorithm  for repeating 100 times}
	\begin{tabular}{ccccccc}
		\toprule
		\textbf{Algorithm} & \textbf{Design}& \textbf{Min}	& \textbf{Mean} & \textbf{Max} & \textbf{SD} &\textbf{AT} \\
		\midrule
		SESE                   &   FSLHD$(15,30;2,2)$             & 7.8674    &  8.3100      & 8.7988    & 0.0172   & 406 seconds \\
		Part-I                  &   FSLHD$(15,30; 2,2)$            & 9.0118     &  10.3349    & 14.5465   & 0.5958   & 12   seconds  \\
		Part-I +Part-II      &   FSLHD$(15,30;2,2)$             &8.2610     & 9.1712        & 11.2161     & 0.1276   &  18  seconds\\
		SESE                   &     FSLHD$(5,10,15,30;4,6)$    & 1.8614    &   2.0823     & 2.4874    & 0.0103   & 1110 seconds  \\
		Part-I                  &    FSLHD$(5,10,15,30;4,6)$	 & 2.0474	  & 2.2545      & 2.8380    & 0.0122  &  51 seconds \\
		Part-I +Part-II	     &    FSLHD$(5,10,15,30;4,6)$     & 1.9041     & 2.2424      & 2.0394    & 0.0031  &  71 seconds\\
		\bottomrule
	\end{tabular}
	\label{table1} 
\end{table*}
By comparison, the resulting designs is better after using  SESE algorithm. However, for generating space-filing FSLHDs with large runs and factors as well as considering the cost of time, the two-part algorithm is preferable.

\section{Discussion the methods for evaluating the combined space-filling measurement}
\label{Remarks}
Recall that $\boldsymbol{D}$ is the design matrix of an ${\rm FSLHD}(n_{1},\ldots,n_{u};u, q)$.  Since we generate a neighbor of design by exchanging two elements in one column of $\boldsymbol{D}$, we do not need to recalculate all the inter-site distances when we update $\phi_ {t}(\boldsymbol{D})$ or $\phi_{t}(\boldsymbol{D}^{(i)})$. The calculative efficiency of optimality criteria for the LHD has been discussed in \cite{Jin2016An}. Here, based on above three exchange procedures for the FSLHD, we give updating expressions of $\phi_ {\textrm{CSM}}(\boldsymbol{D})$ using the previous $\phi_{t}(\boldsymbol{D})$ and  $\phi_{t}(\boldsymbol{D}^{(i)})$ for our proposed algorithm.

For the design matrix $\boldsymbol{D}=(x_{ij})_{n \times q}$ with $n$ design points $\{\boldsymbol{x}_{1},\cdots,\boldsymbol{x}_{n}\}$, we exchange $x_{rk}$ and $x_{sk}$ in the $k$th column of the design. Let $d(\cdot,\cdot)$ be the inter-site distance before exchanging.
Let $v \neq r,s$, $1 \leq v \leq n$, as defined in (\ref{inter-point distance}), the new related inter-site distance of the two design points $\boldsymbol{x}_{r}$ and $\boldsymbol{x}_{s}$ should be updated:
\begin{equation*}
d'(\boldsymbol{x}_{r},\boldsymbol{x}_{v})= ((d(\boldsymbol{x}_{r},\boldsymbol{x}_{v}))^{m}+ h(r,s,k,v))^{1/m},
\end{equation*}
\begin{equation*}
d'(\boldsymbol{x}_{s},\boldsymbol{x}_{v})= ((d(\boldsymbol{x}_{r},\boldsymbol{x}_{v}))^{m}- h(r,s,k,v))^{1/m},
\end{equation*}
where $h(r,s,k,v)=|x_{sk}-x_{vk}|^{m}-|x_{rk}-x_{vk}|^{m}$ and the other inter-site distances are unchanged. We give a new $\phi_ {\textrm{CSM}}'(\boldsymbol{D})$ based on the previous
$\phi_{t}(\boldsymbol{D})$ and  $\phi_{t}(\boldsymbol{D}^{(i)})$ as follows:
\begin{equation*}
\phi_ {\textrm{CSM}}'(\boldsymbol{D})=w\phi_ {t}'(\boldsymbol{D})+(1-w)\left(\sum_{i=1}^{u}\lambda_{i}\phi_ {t}'(\boldsymbol{D}^{(i)})\right).
\end{equation*}
For three different procedures, the values of $\phi_ {t}'(\boldsymbol{D})$ and $\phi_{t}'(\boldsymbol{D}^{(i)})$ are determined as follow:\\
(i) \textbf{Within-slice exchange procedure.} For $e \in \{1,\cdots,u\}$,
suppose that the design points $\boldsymbol{x}_{r}$ and $\boldsymbol{x}_{s}$ are in the $e$th slice. Let $n_{0}=0$. Then we have $r,s\in J_{e}=\{\sum_{l=0}^{{e-1}}n_{l}+1,\cdots,\sum_{l=0}^{{e}}n_{e}\}$ and
\begin{equation*}
\begin{split}
\phi_ {t}'(\boldsymbol{D})=& \left( (\phi_ {t}(\boldsymbol{D}))^{t}+\sum_{1\leq v\leq n,v\neq r,s} \left(d^{'}(\boldsymbol{x}_{r},\boldsymbol{x}_{v})^{-t}-d(\boldsymbol{x}_{r},\boldsymbol{x}_{v})^{-t}\right)\right. \\
&\left. +\sum_{1\leq v\leq n,v\neq r,s}\left(d^{'}(\boldsymbol{x}_{s},\boldsymbol{x}_{v})^{-t}-d(\boldsymbol{x}_{s},\boldsymbol{x}_{v})^{-t}\right)  \right)^{1/t},
\end{split}
\end{equation*}

\begin{equation*}
\phi_ {t}'(\boldsymbol{D}^{(i)})=\\
\left\{
\begin{aligned}
&\phi_{t}(\boldsymbol{D}^{(i)}),  \qquad \qquad \qquad \qquad \qquad \qquad \qquad \qquad \qquad \qquad \ \ \text{$if$ $i\neq e$};\\
&\left( \left(\phi_ {t}(\boldsymbol{D}^{(i)})\right)^{t}+\sum_{v\in J_{e},v\neq r,s} (d'(\boldsymbol{x}_{r},\boldsymbol{x}_{v})^{-t}-d(\boldsymbol{x}_{r},\boldsymbol{x}_{v})^{-t})\right. \\
&\left. \qquad+\sum_{v\in J_{e},v\neq r,s} (d'(\boldsymbol{x}_{s},\boldsymbol{x}_{v})^{-t}-d(\boldsymbol{x}_{s},\boldsymbol{x}_{v})^{-t}\right)^{1/t},
\qquad \qquad  \text{$if$ $i=e$}.
\end{aligned}
\right.
\end{equation*}
(ii) \textbf{Different-slice  exchange procedure.}
For $e,e' \in \{1,\cdots,u\}$,
suppose that the design points $\boldsymbol{x}_{r}$ are in the $e$th slice and $\boldsymbol{x}_{s}$ in the $e'$th slice. Let $n_{0}=0$. Then we have $r\in J_{e}=\{\sum_{l=0}^{{e-1}}n_{l}+1,\cdots,\sum_{l=0}^{{e}}n_{e}\}$, $s\in J_{e'}=\{\sum_{l=0}^{{e'-1}}n_{l}+1,\cdots,\sum_{l=0}^{{e'}}n_{e'}\}$ and

\begin{equation*}
\begin{split}
\phi_ {t}'(\boldsymbol{D})= &\left((\phi_ {t}(\boldsymbol{D}))^{t}+\sum_{1\leq v\leq n,v\neq r,s} (d^{'}(\boldsymbol{x}_{r},\boldsymbol{x}_{v})^{-t}-d(\boldsymbol{x}_{r},\boldsymbol{x}_{v})^{-t})\right.\\
&\left. +\sum_{1\leq v\leq n,v\neq r,s}(d^{'}(\boldsymbol{x}_{s},\boldsymbol{x}_{v})^{-t}-d(\boldsymbol{x}_{s},\boldsymbol{x}_{v})^{-t})\right)^{1/t},
\end{split}
\end{equation*}

\begin{equation*}
\phi_ {t}'(\boldsymbol{D}^{(i)})=
\left\{
\begin{split}
&\phi_{t}(\boldsymbol{D}^{(i)}),      &\text{$if$ $i\neq e,e'$};\\ 
& \left( \left( \phi_ {t}(\boldsymbol{D}^{(i)})\right)^{t} + \sum_{v\in J_{e},v\neq r} \left(d'(\boldsymbol{x}_{r},\boldsymbol{x}_{v})^{-t}-d(\boldsymbol{x}_{r},\boldsymbol{x}_{v})^{-t}\right)\right)^{1/t},& \text{$if$ $i=e$};\\
&\left( \left(\phi_ {t}(\boldsymbol{D}^{(i)})\right)^{t}+\sum_{v\in J_{e'},v\neq s} \left(d'(\boldsymbol{x}_{s},\boldsymbol{x}_{v})^{-t}-d(\boldsymbol{x}_{s},\boldsymbol{x}_{v})^{-t}\right) \right)^{1/t}, &\text{$if$ $i=e'$}.
\end{split}
\right.
\end{equation*}
(iii) \textbf{Out-slice exchange procedure.}
For $e\in \{1,\cdots,u\}$, suppose that the element $\boldsymbol{x}_{rk}$ are in the $e$th slice and the element $x_{sk}'\in (0,1)$  are in the out slice. Let $n_{0}=0$.
Then we have $r\in J_{e}=\{\sum_{l=0}^{{e-1}}n_{l}+1,\cdots,\sum_{l=0}^{{e}}n_{e}\}$, $h(r,s,k,v)=|x_{sk}'-x_{vk}|^{m}-|x_{rk}-x_{vk}|^{m}$, $v\neq r$ and

\begin{equation*}
\begin{split}
\phi_ {t}'(\boldsymbol{D})= &\left(\left(\phi_ {t}(\boldsymbol{D})\right)^{t}+\sum_{1\leq v\leq n,v\neq r} \left(d^{'}(\boldsymbol{x}_{r},\boldsymbol{x}_{v})^{-t}-d(\boldsymbol{x}_{r},\boldsymbol{x}_{v})^{-t}\right)\right)^{1/t},
\end{split}
\end{equation*}

\begin{equation*}
\phi_ {t}'(\boldsymbol{D}^{(i)})=\left\{
\begin{split}
&\phi_{t}(\boldsymbol{D}^{(i)}), &\text{$if$ $i\neq e$};\\
&\left(\left(\phi_ {t}(\boldsymbol{D}^{(i)})\right)^{t}+\sum_{v\in J_{e},v\neq r}\left(d'(\boldsymbol{x}_{r},\boldsymbol{x}_{v})^{-t}-d(\boldsymbol{x}_{r},\boldsymbol{x}_{v})^{-t}\right)\right)^{1/t},& \text{$if$ $i=e$}.
\end{split}
\right.
\end{equation*}

Through the above description of the updating  formulas, we can improve the  efficiency of re-evaluating  $\phi_ {\textrm{CSM}}(\boldsymbol{D})$ for our proposed algorithm.
\section{Conclusions}
\label{Conclusions}
In this article, we propose a method to construct SLHDs with arbitrary run sizes.
Based on such designs, we give an SESE  algorithm to search  the optimal FSLHDs.
Moreover, we provide an efficient two-part algorithm to improve the optimization efficiency in generating the space-filling FSLHDs with large runs  and factors.
We believe that FSLHDs with optimal univariate uniformity and good space-filling properties are more widely used in computer experiments. Orthogonality is also an appealing feature for SLHDs. Orthogonal SLHDs are constructed  in \cite{yang2013construction, huang2014construction,cao2015construction}, however, orthogonality does not ensure a good space-filling property.  In the future, we will study the construction of an orthogonal-maximin SLHD with slices of arbitrary run sizes. Such a design have both orthogonality and space-filling property.


\begin{thebibliography}{-------}
	\providecommand{\natexlab}[1]{#1}
	
	\bibitem[McKay \em{et~al.}(1979)McKay, Beckman, and Conover]{McKay1979}
	McKay, M.D.; Beckman, R.J.; Conover, W.J.
	\newblock A Comparison of Three Methods for Selecting Values of Input Variables
	in the Analysis of Output From a Computer Code.
	\newblock {\em Technometrics} {\bf 1979}, {\em 21},~381--402.
	
	\bibitem[Qian(2012)]{Peter2012Sliced}
	Qian, P.Z.G.
	\newblock Sliced Latin Hypercube Designs.
	\newblock {\em Journal of the American Statistical Association} {\bf 2012},
	{\em 107},~393--399.
	
	\bibitem[He and Qian(2016)]{he2016central}
	He, X.; Qian, P.Z.
	\newblock A central limit theorem for nested or sliced Latin hypercube designs.
	\newblock {\em Statistica Sinica} {\bf 2016}, pp. 1117--1128.
	
	\bibitem[Qian \em{et~al.}(2008)Qian, Wu, and Wu]{Qian2008Gaussian}
	Qian, P.Z.G.; Wu, H.; Wu, C.F.J.
	\newblock Gaussian Process Models for Computer Experiments with Qualitative and
	Quantitative Factors.
	\newblock {\em Technometrics} {\bf 2008}, {\em 50},~383--396.
	
	\bibitem[Gang \em{et~al.}(2009)Gang, Santner, Notz, and
	Bartel]{Gang2009Prediction}
	Gang, H.; Santner, T.J.; Notz, W.I.; Bartel, D.L.
	\newblock Prediction for Computer Experiments Having Quantitative and
	Qualitative Input Variables.
	\newblock {\em Technometrics} {\bf 2009}, {\em 51},~278--288.
	
	\bibitem[Deng \em{et~al.}(2017)Deng, Lin, Liu, and Rowe]{Deng2017Additive}
	Deng, X.; Lin, C.D.; Liu, K.W.; Rowe, R.K.
	\newblock Additive Gaussian Process for Computer Models With Qualitative and
	Quantitative Factors.
	\newblock {\em Technometrics A Journal of Statistics for the Physical Chemical
		Engineering Sciences} {\bf 2017}, {\em 59},~283--292.
	
	\bibitem[Hwang \em{et~al.}(2016)Hwang, He, and Qian]{hwang2016sliced}
	Hwang, Y.; He, X.; Qian, P.Z.
	\newblock Sliced orthogonal array-based Latin hypercube designs.
	\newblock {\em Technometrics} {\bf 2016}, {\em 58},~50--61.
	
	\bibitem[Yin \em{et~al.}(2014)Yin, Lin, and Liu]{yin2014sliced}
	Yin, Y.; Lin, D.K.; Liu, M.Q.
	\newblock Sliced Latin hypercube designs via orthogonal arrays.
	\newblock {\em Journal of Statistical Planning and Inference} {\bf 2014}, {\em
		149},~162--171.
	
	\bibitem[Xie \em{et~al.}(2014)Xie, Xiong, Qian, and Wu]{xie2014general}
	Xie, H.; Xiong, S.; Qian, P.Z.; Wu, C.J.
	\newblock General sliced Latin hypercube designs.
	\newblock {\em Statistica Sinica} {\bf 2014}, pp. 1239--1256.
	
	\bibitem[Yang \em{et~al.}(2016)Yang, Chen, Lin, and Liu]{yang2016construction}
	Yang, J.; Chen, H.; Lin, D.K.; Liu, M.Q.
	\newblock Construction of sliced maximin-orthogonal Latin hypercube designs.
	\newblock {\em Statistica Sinica} {\bf 2016}, pp. 589--603.
	
	\bibitem[Huang \em{et~al.}(2015)Huang, Lin, Liu, and Yang]{Huang2015Computer}
	Huang, H.; Lin, D.K.J.; Liu, M.Q.; Yang, J.F.
	\newblock Computer Experiments With Both Qualitative and Quantitative
	Variables.
	\newblock {\em Technometrics} {\bf 2015}, {\em 58},~0--00.
	
	\bibitem[Kennedy and O'Hagan(2000)]{Kennedy2000Predicting}
	Kennedy, M.C.; O'Hagan, A.
	\newblock Predicting the Output from a Complex Computer Code When Fast
	Approximations Are Available.
	\newblock {\em Biometrika} {\bf 2000}, {\em 87},~1--13.
	
	\bibitem[Qian \em{et~al.}(2006)Qian, Seepersad, Joseph, Allen, and
	Wu]{Qian2006Buildingsurrogate}
	Qian, Z.; Seepersad, C.C.; Joseph, V.R.; Allen, J.K.; Wu, C.F.J.
	\newblock Building surrogate models based on detailed and approximate
	simulations.
	\newblock {\em Journal of Mechanical Design} {\bf 2006}, {\em 128},~668.
	
	\bibitem[Ba \em{et~al.}(2015)Ba, Myers, and Brenneman]{Ba2015Optimal}
	Ba, S.; Myers, W.R.; Brenneman, W.A.
	\newblock Optimal Sliced Latin Hypercube Designs.
	\newblock {\em Technometrics} {\bf 2015}, {\em 57},~479--487.
	
	\bibitem[Chen \em{et~al.}(2016)Chen, Huang, Lin, and Liu]{Chen2016}
	Chen, H.; Huang, H.; Lin, D.K.; Liu, M.Q.
	\newblock {Uniform sliced Latin hypercube designs}.
	\newblock {\em Applied Stochastic Models in Business and Industry} {\bf 2016},
	{\em 32},~574--584.
	\newblock
	
	\bibitem[Kong \em{et~al.}(2018)Kong, Ai, and Tsui]{kong2018flexible}
	Kong, X.; Ai, M.; Tsui, K.L.
	\newblock Flexible sliced designs for computer experiments.
	\newblock {\em Annals of the Institute of Statistical Mathematics} {\bf 2018},
	{\em 70},~631--646.
	
	\bibitem[Xu \em{et~al.}(2018)Xu, He, Duan, and Wang]{xu2018sliced}
	Xu, J.; He, X.; Duan, X.; Wang, Z.
	\newblock Sliced Latin Hypercube Designs for Computer Experiments With Unequal
	Batch Sizes.
	\newblock {\em IEEE Access} {\bf 2018}, {\em 6},~60396--60402.
	
	\bibitem[{Xu} \em{et~al.}(2019){Xu}, {He}, {Duan}, and
	{Wang}]{2019arXiv190502721X}
	{Xu}, J.; {He}, X.; {Duan}, X.; {Wang}, Z.
	\newblock {Sliced Latin hypercube designs with arbitrary run sizes}.
	\newblock {\em arXiv e-prints} {\bf 2019}, p. arXiv:1905.02721,
	\href{http://xxx.lanl.gov/abs/1905.02721}{{\normalfont
			[arXiv:math.ST/1905.02721]}}.
	
	\bibitem[Johnson \em{et~al.}(1990)Johnson, Moore, and
	Ylvisaker]{Johnson1990Minimax}
	Johnson, M.E.; Moore, L.M.; Ylvisaker, D.
	\newblock Minimax and maximin distance designs.
	\newblock {\em Journal of Statistical Planning Inference} {\bf 1990}, {\em
		26},~131--148.
	
	\bibitem[Grosso \em{et~al.}(2009)Grosso, Jamali, and
	Locatelli]{Grosso2009Finding}
	Grosso, A.; Jamali, A.R.M.J.U.; Locatelli, M.
	\newblock Finding maximin latin hypercube designs by Iterated Local Search
	heuristics.
	\newblock {\em European Journal of Operational Research} {\bf 2009}, {\em
		197},~541--547.
	
	\bibitem[Dam \em{et~al.}(2007)Dam, M., Hertog, and
	Melissen]{Dam2007MaximinLatin}
	Dam, E.R.; M., H.B.G.; Hertog, D.D.; Melissen, H.
	\newblock Maximin Latin Hypercube Designs in Two Dimensions.
	\newblock {\em Operations Research} {\bf 2007}, {\em 55},~158--169.
	
	\bibitem[Dam \em{et~al.}(2009)Dam, Rennen, and Husslage]{Dam2009Bounds}
	Dam, E.R.V.; Rennen, G.; Husslage, B.G.M.
	\newblock Bounds for Maximin Latin Hypercube Designs.
	\newblock {\em Operations Research} {\bf 2009}, {\em 57},~595--608.
	
	\bibitem[Jin \em{et~al.}(2016)Jin, Chen, and Sudjianto]{Jin2016An}
	Jin, R.; Chen, W.; Sudjianto, A.
	\newblock An efficient algorithm for constructing optimal design of computer
	experiments.
	\newblock {\em Journal of Statistical Planning Inference} {\bf 2016}, {\em
		134},~268--287.
	
	\bibitem[Morris and Mitchell(1995)]{Morris1995Exploratory}
	Morris, M.D.; Mitchell, T.J.
	\newblock Exploratory designs for computational experiments.
	\newblock {\em Journal of Statistical Planning Inference} {\bf 1995}, {\em
		43},~381--402.
	
	\bibitem[Ye \em{et~al.}(2000)Ye, Li, and Sudjianto]{Ye2000Algorithmic}
	Ye, K.Q.; Li, W.; Sudjianto, A.
	\newblock Algorithmic construction of optimal symmetric Latin hypercube
	designs.
	\newblock {\em Journal of Statistical Planning Inference} {\bf 2000}, {\em
		90},~145--159.
	
	\bibitem[Viana \em{et~al.}(2010)Viana, Venter, and Balabanov]{Viana2010An}
	Viana, F.A.C.; Venter, G.; Balabanov, V.
	\newblock An algorithm for fast optimal Latin hypercube design of experiments.
	\newblock {\em International Journal for Numerical Methods in Engineering} {\bf
		2010}, {\em 82},~135--156.
	
	\bibitem[Hickernell(1998)]{Hickernell1998A}
	Hickernell, F.
	\newblock A generalized discrepancy and quadrature error bound.
	\newblock {\em Mathematics of Computation} {\bf 1998}, {\em 67},~299--322.
	
	\bibitem[Fang \em{et~al.}(2002)Fang, Ma, and Winker]{Fang2002Centered}
	Fang, K.T.; Ma, C.X.; Winker, P.
	\newblock Centered L2-Discrepancy of Random Sampling and Latin Hypercube
	Design, and Construction of Uniform Designs.
	\newblock {\em Mathematics of Computation} {\bf 2002}, {\em 71},~275--296.
	
	\bibitem[Chen \em{et~al.}(2013)Chen, Hsieh, Ying, and Wang]{Chen2013Optimizing}
	Chen, R.B.; Hsieh, D.N.; Ying, H.; Wang, W.
	\newblock Optimizing Latin hypercube designs by particle swarm.
	\newblock {\em Statistics Computing} {\bf 2013}, {\em 23},~663--676.
	
	\bibitem[Kennedy and Eberhart(1995)]{Kennedy1995Particle}
	Kennedy, J.; Eberhart, R.
	\newblock Particle swarm optimization.
	\newblock  Icnn95-international Conference on Neural Networks,  1995, pp.
	1942--1948.
	
	\bibitem[Liefvendahl and Stocki(2006)]{Liefvendahl2006A}
	Liefvendahl, M.; Stocki, R.
	\newblock A study on algorithms for optimization of Latin hypercubes.
	\newblock {\em Journal of Statistical Planning Inference} {\bf 2006}, {\em
		136},~3231--3247.
	
	\bibitem[Bates \em{et~al.}(2004)Bates, Sienz, and
	Toropov]{Bates2004Formulation}
	Bates, S.; Sienz, J.; Toropov, V.
	\newblock Formulation of the optimal Latin hypercube design of experiments
	using a permutation genetic algorithm.
	\newblock  In: 45th AIAA/ASME/ASCE/AHS/ASC Structures, Structural Dynamics
	Materials Conference,  2004, pp. 1--7.
	
	\bibitem[Chen and Xiong(2017)]{Chen2017Flexible}
	Chen, D.; Xiong, S.
	\newblock Flexible Nested Latin Hypercube Designs for Computer Experiments.
	\newblock {\em Journal of Quality Technology A Quarterly Journal of Methods
		Applications Related Topics} {\bf 2017}, {\em 49},~337--353.
	
	\bibitem[Yang \em{et~al.}(2013)Yang, Lin, Qian, and Lin]{yang2013construction}
	Yang, J.F.; Lin, C.D.; Qian, P.Z.; Lin, D.K.
	\newblock Construction of sliced orthogonal Latin hypercube designs.
	\newblock {\em Statist. Sinica} {\bf 2013}, {\em 23},~7--1130.
	
	\bibitem[Huang \em{et~al.}(2014)Huang, Yang, and Liu]{huang2014construction}
	Huang, H.; Yang, J.F.; Liu, M.Q.
	\newblock Construction of sliced (nearly) orthogonal Latin hypercube designs.
	\newblock {\em Journal of Complexity} {\bf 2014}, {\em 30},~355--365.
	
	\bibitem[Cao and Liu(2015)]{cao2015construction}
	Cao, R.Y.; Liu, M.Q.
	\newblock Construction of second-order orthogonal sliced Latin hypercube
	designs.
	\newblock {\em Journal of Complexity} {\bf 2015}, {\em 31},~762--772.
\end{thebibliography}
\end{document}